\theoremstyle{plain}
\theoremstyle{definition}
\newtheorem{Theorem}{Theorem}[section]
\newtheorem{conjecture}[Theorem]{Conjecture}
\newtheorem{Lemma}[Theorem]{Lemma}
\newtheorem{Proposition}[Theorem]{Proposition}
\newtheorem{definition}[Theorem]{Definition}
\newtheorem{Remark}[Theorem]{Remark}
\numberwithin{equation}{section}
\DeclareMathAlphabet\mathbb{U}{msb}{m}{n}
\begin{document}

\title{On minimal crossing number braid diagrams and homogeneous braids}

\author{Ilya Alekseev}
\author{Geidar Mamedov}
\thanks{This work was supported by the Russian Science Foundation \mbox{(project no. 19-11-00151)}.}

\address{Laboratory of Modern Algebra and Applications, St. Petersburg State University, 14th Line, 29b, Saint Petersburg, 199178 Russia}
\email{ilyaalekseev@yahoo.com}
\address{Laboratory of Continuous Mathematical Education (School 564 of St. Petersburg), nab. Obvodnogo kanala 143, Saint Petersburg, Russia}
\email{geidar.mamedov1@gmail.com}

\begin{abstract}
We study braid diagrams with a minimal number of crossings. Such braid diagrams correspond to geodesic words for the braid groups with standard Artin generators. We prove that a diagram of a homogeneous braid is minimal if and only if it is homogeneous. We conjecture that monoids of homogeneous braids are Artin-Tits monoids and prove that monoids of alternating braids are right-angled Artin monoids. Using this, we give a lower bound on the growth rate of the braid groups.
\end{abstract}

\maketitle

\section{Introduction}

In this paper, we study the braid groups $B_n$. These groups have the following presentations with standard Artin generators:
\begin{align*}
B_n = \langle \sigma_1, \ldots, \sigma_{n-1} \mid \sigma_k\sigma_{k+1}\sigma_k=\sigma_{k+1}\sigma_k\sigma_{k+1},\ 1 \leq k \leq n-2, \ \ \  \sigma_i\sigma_j=\sigma_j\sigma_i, \ |i-j|\geq 2 \rangle,
\end{align*}
The relation $\sigma_i\sigma_j=\sigma_j\sigma_i$ is usually referred to as the far commutativity relation and $\sigma_k\sigma_{k+1}\sigma_k=\sigma_{k+1}\sigma_k\sigma_{k+1}$ is usually referred to as the braid relation. For example, $B_3 = \langle a,b \mid aba = bab\rangle$. By a braid word or a word in $B_n$ we mean a word in the alphabet $\{\sigma_1, \ldots, \sigma_{n-1},\sigma_1^{-1}, \ldots, \sigma_{n-1}^{-1}\}.$ One may visualize braid words by their (vertical) braid diagrams, such as in Figure \ref{HomBraidExample} and Figure \ref{PNotMinimal}. There is a correspondence between the far commutativity classes of braid words and the plane isotopy classes of braid diagrams. A braid diagram is called minimal (short for minimal crossing number braid diagram) if it has the least possible number of crossings among all diagrams representing the same braid. The number of crossings on a minimal diagram of a braid is called the crossing number of the braid. A braid word is called geodesic if the corresponding braid diagram is minimal. Our aim is to study the language of geodesic words in the braid groups with the standard generating set.

In \cite{Stallings}, J. Stallings introduced the concept of homogeneous braid words. In particular, he proved that the Alexander closure of a non-degenerate homogeneous braid is a fibered link. More precisely, let $e_1, \ldots, e_{n-1} \in \{1,-1\}$. A braid word is said to be $(e_1,\ldots,e_{n-1})$--homogeneous if it contains none of the letters $\sigma_1^{-e_1}, \sigma_2^{-e_2}, \ldots, \sigma_{n-1}^{-e_{n-1}}$. A braid diagram is called positive if it is $(1,1,1, \ldots, 1)$--homogeneous. If a word is $(e_1,\ldots,e_{n-1})$--homogeneous for some $e_1, \ldots, e_{n-1}$, we say that it is homogeneous. In a similar way, we say that a braid diagram is homogeneous (resp. positive) if none of its columns contains both types of crossings (resp. negative crossings), see Figure \ref{HomBraidExample} and Figure \ref{E2}. A braid is called homogeneous (resp. positive) if it has a homogeneous (resp. positive) diagram.

In \cite{Cromwell} the concept of homogeneous braids was generalized to links. The class of homogeneous links includes the classes of alternating links (such as rational links) and positive links. There is a well-known classification of minimal diagrams for alternating links. In contrast to alternating links, there are homogeneous (resp. positive) links that admit non-minimal homogeneous (resp. positive) diagrams and admit non-homogeneous (resp. non-positive) minimal diagrams (for example, $11_{550}$ in the Rolfsen knot table, see \cite{Stoimenow}). We present the following result.

\begin{spacing}{1.5}
\end{spacing}

\noindent {\bf Theorem \ref{HomogeneousAreGeodesic}}. A diagram $D$ of a homogeneous braid is minimal if and only if $D$ is homogeneous.

\begin{spacing}{1.5}
\end{spacing}

The proof is based on the Alexander-Conway polynomial for tangles. In the case of positive braids, the result is well known and easy to prove, see Lemma~\ref{Obviouslemma} below. In \cite{Turaev1990} it was noticed that any reduced alternating tangle diagram (and hence any reduced alternating braid diagram) is minimal.

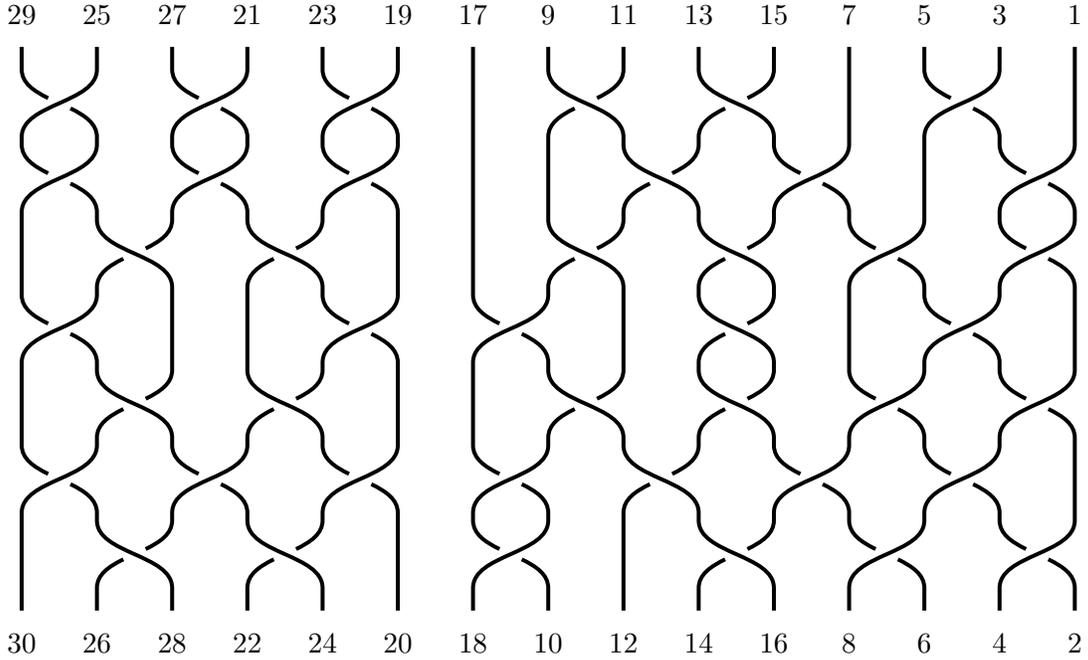
\begin{figure}[H]
\centering
\begin{tikzpicture}
\braid[scale=1,line width=1.5pt] (HomogeneousBraid)
s_1^{-1}-s_3^{-1}-s_5^{-1}-s_8-s_{10}-s_{13}^{-1}
s_1^{-1}-s_3^{-1}-s_5^{-1}-s_9-s_{11}^{-1}-s_{14}^{-1}
s_2-s_4-s_8-s_{10}-s_{12}^{-1}-s_{14}^{-1}
s_1^{-1}-s_5^{-1}-s_7^{-1}-s_{10}-s_{13}^{-1}
s_2-s_4-s_8-s_{10}-s_{12}^{-1}-s_{14}^{-1}
s_1^{-1}-s_3^{-1}-s_5^{-1}-s_7^{-1}-s_9-s_{11}^{-1}-s_{13}^{-1}
s_2-s_4-s_7^{-1}-s_{10}-s_{12}^{-1}-s_{14}^{-1};
\node[label=above:29] at (HomogeneousBraid-1-s) {};
\node[label=above:25] at (HomogeneousBraid-2-s) {};
\node[label=above:27] at (HomogeneousBraid-3-s) {};
\node[label=above:21] at (HomogeneousBraid-4-s) {};
\node[label=above:23] at (HomogeneousBraid-5-s) {};
\node[label=above:19] at (HomogeneousBraid-6-s) {};
\node[label=above:17] at (HomogeneousBraid-7-s) {};
\node[label=above:9] at (HomogeneousBraid-8-s) {};
\node[label=above:11] at (HomogeneousBraid-9-s) {};
\node[label=above:13] at (HomogeneousBraid-10-s) {};
\node[label=above:15] at (HomogeneousBraid-11-s) {};
\node[label=above:7] at (HomogeneousBraid-12-s) {};
\node[label=above:5] at (HomogeneousBraid-13-s) {};
\node[label=above:3] at (HomogeneousBraid-14-s) {};
\node[label=above:1] at (HomogeneousBraid-15-s) {};
\node[label=below:24] at (HomogeneousBraid-1-e) {};
\node[label=below:30] at (HomogeneousBraid-2-e) {};
\node[label=below:28] at (HomogeneousBraid-3-e) {};
\node[label=below:22] at (HomogeneousBraid-4-e) {};
\node[label=below:20] at (HomogeneousBraid-5-e) {};
\node[label=below:26] at (HomogeneousBraid-6-e) {};
\node[label=below:16] at (HomogeneousBraid-7-e) {};
\node[label=below:6] at (HomogeneousBraid-8-e) {};
\node[label=below:10] at (HomogeneousBraid-9-e) {};
\node[label=below:4] at (HomogeneousBraid-10-e) {};
\node[label=below:18] at (HomogeneousBraid-11-e) {};
\node[label=below:12] at (HomogeneousBraid-12-e) {};
\node[label=below:14] at (HomogeneousBraid-13-e) {};
\node[label=below:2] at (HomogeneousBraid-14-e) {};
\node[label=below:8] at (HomogeneousBraid-15-e) {};
\end{tikzpicture}
\caption{An example of degenerate $(1,-1,1,-1,1,1,1,-1,-1,-1,1,1,1,1)$-homogeneous braid equipped with a homogeneous ordering.}\label{HomBraidExample}
\end{figure}

Note that $(e_1, \ldots, e_{n-1})$--homogeneous braids form a submonoid $\mathcal{HBM}(e_1, \ldots, e_{n-1})$ in $B_n$. These submonoids have generators $\sigma_1^{e_1}, \ldots, \sigma_{n-1}^{e_{n-1}}$, which we call the standard generators.

As a generalization of the previous result, we propose the following conjecture, which states that $\mathcal{HBM}(e_1, \ldots, e_{n-1})$ are Artin-Tits monoids.
\begin{conjecture}\label{HBGR} 
Let $e_1, \ldots, e_{n-1} \in \{1,-1\}$. Denote by $a_1, \ldots, a_{n-1}$ the standard generators of the monoid $\mathcal{HBM}(e_1, \ldots, e_{n-1})$. Then the monoid has a presentation with relations
\begin{enumerate}
\item $a_ka_{k+1}a_k = a_{k+1}a_ka_{k+1}$ whenever $e_k = e_{k+1}$;
\item $a_ia_j = a_ja_i$, $|i-j| \geq 2$.
\end{enumerate}
\end{conjecture}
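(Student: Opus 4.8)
The plan is to fix the signs $e_1,\ldots,e_{n-1}$, let $M$ be the abstract monoid defined by the presentation in the statement (generators $a_1,\ldots,a_{n-1}$, the braid relations on same-sign adjacent pairs and far commutativity on distant pairs), and study the homomorphism $\phi\colon M\to\mathcal{HBM}(e_1,\ldots,e_{n-1})$ determined by $a_i\mapsto\sigma_i^{e_i}$. This $M$ is by construction an Artin-Tits monoid, whose underlying graph carries an edge of label $3$ between $k$ and $k+1$ exactly when $e_k=e_{k+1}$, and label $\infty$ when $e_k\ne e_{k+1}$. The map $\phi$ is well defined because each relation holds in $B_n$: relation (2) is far commutativity, while relation (1) is the braid relation when $e_k=e_{k+1}=1$ and its mirror image when $e_k=e_{k+1}=-1$. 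Since $\phi$ is surjective by definition of $\mathcal{HBM}$, the conjecture is equivalent to the injectivity of $\phi$, i.e.\ to the assertion that the standard generators satisfy no relation in $B_n$ beyond the consequences of (1) and (2).

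The first step I would carry out is to show that $\phi$ preserves word length. A positive word $w$ in $a_1,\ldots,a_{n-1}$ maps to a $(e_1,\ldots,e_{n-1})$-homogeneous braid word, hence to a homogeneous diagram whose number of crossings equals the length of $w$; by Theorem \ref{HomogeneousAreGeodesic} this diagram is minimal, so the length of $w$ equals the crossing number of $\phi(w)$. Consequently $\phi(u)=\phi(v)$ forces $u$ and $v$ to have equal length, and since (1) and (2) are themselves length-preserving, $M$ carries a length function compatible with $\phi$. Combined with the total exponent-sum homomorphism $B_n\to\mathbb Z$, this determines the number of positive and negative letters in each fibre of $\phi$; note, however, that individual column counts are \emph{not} invariant once relation (1) is present, so these coarse invariants alone cannot detect equality and the real work remains.

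To promote length preservation to injectivity I would proceed by analogy with the alternating case, where $M$ is a right-angled Artin monoid and the only local phenomena are far commutativity and freeness of adjacent opposite-sign pairs. Concretely, I would first verify the rank-two parabolic situations inside $B_3$: when $e_k=e_{k+1}$ the submonoid $\langle\sigma_k^{e_k},\sigma_{k+1}^{e_{k+1}}\rangle$ must be identified with the positive braid monoid $\langle a,b\mid aba=bab\rangle$ or its mirror, which is the classical Garside embedding of $B_3^{+}$ into $B_3$; when $e_k\ne e_{k+1}$ one shows instead that $\langle\sigma_k,\sigma_{k+1}^{-1}\rangle$ is free of rank two, via an induction on length that peels off first letters and invokes Theorem \ref{HomogeneousAreGeodesic} to exclude the mixed case. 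Because Artin-Tits monoids are cancellative, embed in their enveloping groups, and have standard parabolic submonoids that are again Artin-Tits monoids of the same type, I would then try to assemble these local facts into a global normal form — for instance by matching the greedy (Garside) normal form of an element of $M$ with a canonically chosen minimal homogeneous diagram of its image — and thereby force any coincidence $\phi(u)=\phi(v)$ to factor through the rank-two relations.

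The main obstacle is precisely this local-to-global passage. In the alternating case the combinatorics of traces (heaps of pieces) make equality in the monoid detectable by projecting onto pairs of generators, so freeness of adjacent pairs together with far commutativity suffices; for a general sign pattern the braid relations $a_ka_{k+1}a_k=a_{k+1}a_ka_{k+1}$ destroy this linear structure, and one must rule out long-range coincidences invisible two generators at a time. Diagrammatically, the task is to show that two minimal homogeneous diagrams of the same braid are related using only Reidemeister III moves inside monochromatic columns and planar isotopies, never a Reidemeister II cancellation: minimality forbids such cancellations at the endpoints, but a generic deformation realising the braid equality passes through non-minimal and non-homogeneous intermediate diagrams, and confining the whole deformation to the homogeneous world is the crux. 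I expect the decisive technical input to be a completeness (cube-condition) argument for the word-reversing system of the presentation, established uniformly in $e_1,\ldots,e_{n-1}$; proving such completeness in the presence of braid relations is where I anticipate the genuine difficulty, and the reason the statement is posed as a conjecture rather than settled alongside the alternating case.
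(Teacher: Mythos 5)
This statement is posed in the paper as an open conjecture: the paper proves it only in two special cases, namely the positive case $\mathcal{HBM}(1,\ldots,1)=B_n^+$ (quoted from the literature via the greedy normal form) and the alternating cases $\mathcal{HBM}(1,-1,\ldots)$ and $\mathcal{HBM}(-1,1,\ldots)$ (Theorem \ref{ProofOfConjectureForAlternatingBraids}, via the flyping theorem applied to plat closures). Your proposal does not close the general case either, and you say so yourself in the final paragraph. So what you have written is a correct framing and a plausible research plan, not a proof.

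To be concrete about where the gap sits: your reduction to the injectivity of $\phi\colon M\to\mathcal{HBM}(e_1,\ldots,e_{n-1})$ is right, your well-definedness and surjectivity observations are right, and the length-preservation step via Theorem \ref{HomogeneousAreGeodesic} is sound (it is essentially the paper's Lemma \ref{Obviouslemma} upgraded to the homogeneous setting). The rank-two parabolic facts you cite are also available: the same-sign case is the classical presentation of $B_3^+$, and the mixed-sign case (freeness of $\langle\sigma_k^{e_k},\sigma_{k+1}^{e_{k+1}}\rangle$ when $e_k\neq e_{k+1}$) follows from the paper's alternating theorem. But the entire content of the conjecture is the local-to-global passage you defer to a ``completeness (cube-condition) argument for the word-reversing system, established uniformly in $e_1,\ldots,e_{n-1}$'' --- equivalently, the claim that any two homogeneous diagrams of the same braid are connected by Reidemeister III moves within same-sign columns and far commutativity alone. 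Neither the cube condition nor the diagrammatic statement is verified anywhere in your proposal, and no invariant is produced that could separate elements of $M$ mapping to the same braid. Until that step is supplied, the argument proves nothing beyond the two special cases already in the paper.
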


Denote by $B_n^+ := \mathcal{HBM}(1,1,\ldots,1)$ the monoid of positive braids. In the particular case of positive braids, the conjecture above is true (see e.g. Theorem 9.2.5 in \cite{WPG}), that is,
\begin{align*}
B_n^+ = \langle \sigma_1, \ldots, \sigma_{n-1} \mid \sigma_k\sigma_{k+1}\sigma_k=\sigma_{k+1}\sigma_k\sigma_{k+1},\ 1 \leq k \leq n-2, \ \ \  \sigma_i\sigma_j=\sigma_j\sigma_i, \ |i-j|\geq 2 \rangle.
\end{align*}
The proof is based on the existence of so-called greedy normal form for positive braids (see \cite{GarsideBook}).

A braid diagram is called alternating if as we travel through from top to bottom, the strings go through the crossings alternately between overpasses and underpasses. A braid word is called alternating if the corresponding diagram is alternating. A braid is called alternating if it has an alternating diagram. It is easy to see that if a braid word $w$ is either $(1,-1,1,-1,\ldots,(-1)^n)$--homogeneous or $(-1,1,-1,\ldots,(-1)^{n-1})$--homogeneous, then $w$ is alternating. Note that the converse does not hold true in general: the braid word $\sigma_1\sigma_3^{-1}$ is alternating but it is not homogeneous. Nevertheless, if a braid word $w$ is alternating and non-degenerate (that is, the corresponding braid diagram does not split), then it is either $(1,-1,1,-1,\ldots,(-1)^n)$--homogeneous or $(-1,1,-1,\ldots,(-1)^{n-1})$--homogeneous.

We prove the conjecture above for alternating braids. Also, we give a proof of the classification of minimal diagrams for alternating braids using the similar classification for alternating links. The proof of the latter is based on the Jones polynomial techniques.
\begin{spacing}{1.5}
\end{spacing}

\noindent {\bf Theorem \ref{ProofOfConjectureForAlternatingBraids}}. Suppose $w$ is a braid word representing an alternating braid. Then, $w$ is geodesic if and only if $w$ is alternating. Moreover, the monoids $\mathcal{HBM}(1,-1,\ldots,(-1)^n)$ and $\mathcal{HBM}(-1,1,\ldots,(-1)^{n-1})$ are isomorphic and have the following presentation
$$\langle a_1, a_2, \ldots, a_{n-1} \mid a_ia_j=a_ja_i, \ |i-j|\geq 2\rangle.$$

This means that the monoids above are right--angled Artin monoids. The latter is also called graph monoids, Cartier-Foata monoids, (free) partially commutative monoids, trace monoids, semifree monoids, and locally free monoids, see \cite{Charney}.
\begin{figure}[H]
\centering
\begin{tikzpicture}
\braid[scale=0.8,rotate=90,
line width=1.5pt] s_2^{-1}s_1^{-1}s_3^{-1}s_2^{-1}s_2^{-1}s_3^{-1}s_1s_2;
\end{tikzpicture} \ \hspace{1cm}
\begin{tikzpicture}
\braid[scale=0.8,rotate=90,
line width=1.5pt] s_2^{-1}s_3^{-1}s_2^{-1}s_1s_2^{-1}s_3^{-1}s_3^{-1}s_2;
\end{tikzpicture}
\caption{Braid diagrams rotated on 90 degrees that correspond to the braid words $\sigma_2\sigma_1\sigma_3\sigma_2\sigma_2\sigma_3\sigma_1^{-1}\sigma_2^{-1}$ and $\sigma_2\sigma_3\sigma_2\sigma_1^{-1}\sigma_2\sigma_3\sigma_3\sigma_2^{-1}$. Both of them are not minimal.
}\label{PNotMinimal}
\end{figure}
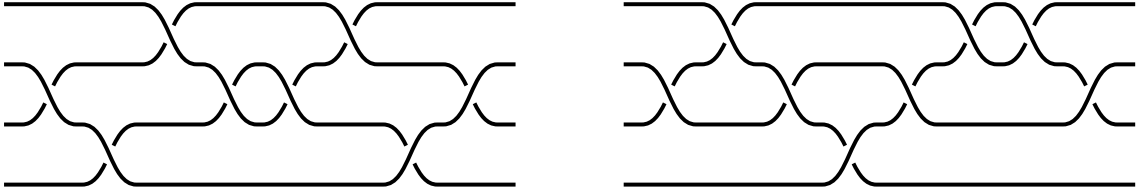

As an application of the results above, we improve known lower bounds on the logarithmic volume of the braid groups as follows. Denote by $P_n(m), A_n(m), H_n(m),$ and $\Gamma_n(m)$ the number of positive, alternating, homogeneous, and all braids with crossing number $m$ in $B_n$ respectively. The function $\Gamma_n(m)$ is called the (spherical) growth function of $B_n$. The limits 
\begin{align*}
v(B_n^+) := \lim\limits_{m \to \infty} \dfrac{\log P_n(m)}{m}, \qquad v(B_n) := \lim\limits_{m \to \infty} \dfrac{\log \Gamma_n(m)}{m}
\end{align*}
are called the logarithmic volume of $B_n^+$ and logarithmic volume of $B_n$ respectively. The results of computations show that $v(B_3) = \log 2$. Theorem 10 in \cite{Vershik} implies that for all sufficiently large $n$
\begin{align*}
\log 2 \leq v(B_n^+) \leq \log 4, \qquad \frac{1}{2} \log 7 \leq v(B_n) \leq \log 7.
\end{align*}
Using Theorem \ref{ProofOfConjectureForAlternatingBraids} and calculations in right--angled Artin monoids from Corollary 2 in \cite{Vershik}, we see that $\log (A_n(m))/m$ converges to $\log 4$ as $m$ tends to infinity (for all sufficiently large $n$). Since $A_n(m) \leq \Gamma_n(m)$, it follows that $\log 4 \leq v(B_n) \leq \log 7$ for all sufficiently large $n$.

In \cite{Sabalka} it is shown that $\sum_{m=0}^\infty \Gamma_3(m) z^m$ is a rational function, that is, $B_3$ has rational growth. It is unknown whether $\sum_{m=0}^\infty \Gamma_n(m) z^m$ is rational for some $n \geq 4$. It is known that there is no linear recurrence relation of degree $\leq 13$ for $\Gamma_4(m)$. In \cite{Deligne,Xu} it is shown that $\sum_{m=0}^\infty P_n(m) z^m$ is rational for all $n \geq 2$ and hence $v(B_n^+)$ could be found explicitly. Actually, as it is shown in \cite{Saito}, any Artin-Tits monoid has rational growth. 

An explicit description of geodesic words in $B_3$ is well known (see \cite{Sabalka,Berger} and Remark \ref{B3} below). Actually, the language of geodesic words in $B_3$ is regular and consists of 27 different cone types. It is an intriguing open problem whether the language of geodesics for $B_n$ (with respect to the standard generators) is regular for some $n\geq 4$, see \cite{Mairesse}. In \cite{CharneyMeier} it is proved that the language of geodesic words in $B_n$ is regular with respect to the generating set of simple divisors of the fundamental (Garside) element. See also \cite{Charney1,Charney2,Dehornoy,Brazil}. There is a problem due to Stallings (Problem 1.8 in Kirby's List \cite{K}) whether geodesic braid words are closed under end extension (replacing a final letter $s$ by $s^n$ for $n \in \mathbb{N}$). Below we present a natural generalizations for the conjecture of Stallings, which we call Stallings' minimal braid diagrams conjecture. A braid $b \in B_n$ is called minimal conjugacy length braid if $b$ has the smallest crossing number among all braids in the conjugacy class $\{c^{-1}bc \mid c \in B_n\}$. A braid word $w$ is called minimal conjugacy length word if $w$ is geodesic and the corresponding braid is minimal conjugacy length braid. An explicit description of minimal conjugacy length words in $B_3$ is well known (see Theorem 6.1. in \cite{AS}).

\begin{conjecture}\label{SMBC}
Let $w,u,v$ be braid words in $B_n$ and $s \in \{\sigma_1, \ldots, \sigma_{n-1},\sigma_1^{-1}, \ldots, \sigma_{n-1}^{-1}\}$.
\begin{enumerate}
\item If $ws$ is geodesic, then $wss$ is geodesic.
\item If $usv$ is geodesic, then $ussv$ is geodesic.
\item If $ws$ is of minimal conjugacy length, then $wss$ is of minimal conjugacy length.
\end{enumerate}
\end{conjecture}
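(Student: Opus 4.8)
The plan is to reduce the conjecture to the statement that a repeated letter strictly raises the crossing number, and then to verify this first where the paper's results apply directly. Write $c(b)$ for the crossing number of a braid $b$. Appending or deleting a single crossing changes $c$ by at most one, so $|c(b)-c(bs)|\le 1$ for every letter $s$. Since $ws$ geodesic means $c(ws)=|w|+1$ and the diagram $wss$ supplies an upper bound $c(wss)\le|w|+2$, statement~(1) is equivalent to the strict inequality $c(wss)\ge c(ws)+1$; the same reformulation applies to~(2), which contains~(1) as the case of empty $v$. Thus the entire content is to certify, by some invariant, that inserting a second copy of $s$ cannot leave the crossing number unchanged or decrease it.

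I would first dispose of the homogeneous case, where Theorem~\ref{HomogeneousAreGeodesic} makes this transparent. Suppose $usv$ is geodesic and represents a homogeneous braid. By Theorem~\ref{HomogeneousAreGeodesic} the diagram $usv$ is then homogeneous, so the column carrying the generator of $s$ contains crossings of a single sign; inserting a second $s$ adjacent to the first adds one more crossing of that same sign to that column and leaves every other column untouched, so $ussv$ is again homogeneous. As a homogeneous diagram it represents a homogeneous braid, and Theorem~\ref{HomogeneousAreGeodesic} run in the opposite direction declares it minimal. Hence $ussv$ is geodesic, proving~(1) and~(2) for homogeneous braids; the alternating subcase is exactly what Theorem~\ref{ProofOfConjectureForAlternatingBraids} already yields.

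For part~(3) I would pass to the link closure $\widehat{b}$ and replace the crossing number by its minimum over the conjugacy class, which is a genuine link invariant of $\widehat{b}$. When $b$ is non-degenerate and homogeneous, Stallings' theorem makes $\widehat{b}$ fibered with fiber the Seifert surface read off from the homogeneous diagram, so the breadth of the Alexander--Conway polynomial of $\widehat{b}$ --- the invariant used in the proof of Theorem~\ref{HomogeneousAreGeodesic} --- becomes a sharp lower bound for the conjugacy crossing number. Tracking how this breadth grows when the second $s$ is inserted, exactly as in that proof, should force the minimal conjugacy length to rise by one and give~(3) in the homogeneous case.

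The main obstacle is the general, non-homogeneous case, and I do not expect the scheme above to reach it. The only crossing-number lower bounds at our disposal are the breadths of the Alexander--Conway and Jones polynomials, and these are not tight in general: as recalled in the introduction there are links, such as $11_{550}$, possessing non-homogeneous minimal diagrams, so the polynomial bound can be strictly smaller than the crossing number and then fails to detect the required increase. Outside the homogeneous and alternating worlds there is no known normal form or characterization of geodesics --- it is open even whether the geodesic language of $B_n$ is regular for $n\ge 4$ --- so a complete proof would demand a genuinely new lower bound for the crossing number that is provably monotone under end extension. Producing such an invariant is precisely the hard part.
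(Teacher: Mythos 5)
This statement is a conjecture, and the paper offers no proof of it; it only records partial evidence, so the right benchmark is whether your partial results match or exceed the paper's. Your reduction of (1) and (2) to the strict inequality $c(wss)\ge c(ws)+1$ is correct, and your argument for homogeneous braids is exactly the paper's: since doubling a letter preserves homogeneity of a word, Theorem~\ref{HomogeneousAreGeodesic} (a diagram of a homogeneous braid is minimal iff it is homogeneous) immediately gives parts (1) and (2) for geodesic words representing homogeneous braids, which is precisely the observation the paper makes after stating the conjecture. You also correctly diagnose why the general case is out of reach --- the only available lower bounds (polynomial breadths) are not sharp on non-homogeneous diagrams --- which is consistent with the paper leaving the statement as a conjecture.

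Two points where you fall short of what the paper records. First, you omit the cases $n=2$ (trivial) and $n=3$, where all three parts are known: the paper cites the explicit regular-language description of geodesics in $B_3$ (Remark~\ref{B3}, after Sabalka) and of minimal conjugacy length words (Theorem 6.1 in \cite{AS}), from which the closure under end extension is checked directly; the paper also notes later that $2$--regular and $3$--regular winding diagrams are closed under end extension for the same reason. Second, your treatment of part (3) is not a proof even in the homogeneous case: passing to the Alexander closure does give a conjugacy invariant, but the bound from Proposition~\ref{MurasugiTheorem} is only available under the braid index hypothesis ${\rm Br}(\overline{w})=n$, and ``tracking how the breadth grows'' when a letter is doubled is asserted rather than carried out. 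The paper makes no claim that part (3) holds for homogeneous braids, and you should not either without closing that gap.
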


In case $n=2$, this is obvious. In case $n=3$, we see that conjecture is true because of an explicit description of both geodesic and minimal conjugacy length words in $B_3$. In \cite{FourStrandsAlgorithm}, a fast algorithm of searching minimal diagram of a given braid was presented (without a proof). It is easy to check that the correctness of the algorithm is equivalent to the original conjecture of Stallings in case $n=4$.

Since homogeneous braid words are closed under extensions above, Theorem \ref{HomogeneousAreGeodesic} implies that the first two parts of the conjecture above hold true for homogeneous braid words.

The third part of the conjecture above is also known as Question 6.1 in \cite{AS}. We may interpret this question as an analogue of Stallings' conjecture for minimal crossing number diagrams of braided links in a solid torus.
Also, there are natural analogues of Stallings' conjecture for minimal crossing number diagrams of links in the 3-sphere and tangles, which hold for reduced alternating diagrams. 

In addition to the results above, we present several sufficient conditions on a braid diagram to be minimal. Those diagrams that satisfy our conditions are said to be winding. Also, we prove that Stalling's conjecture holds for some of them. With the help of winding diagrams, we give lower bound on the number of minimal braid diagrams with $m$ crossings as follows. Denote by $p_n(m), a_n(m), h_n(m)$, and $\gamma_n(m)$ the number of positive, alternating, homogeneous, and geodesic braid words of length $m$ in $B_n$ respectively. The function $\gamma_n(m)$ is called the (spherical) geodesic growth function of $B_n$. As Lemma \ref{eq} shows, Conjecture \ref{SMBC} \mbox{implies $\gamma_n(m+1) \geq (n-1)\gamma_n(m)$.} Note that $\gamma_n(m)\leq (2n-2)^m = 2^m \cdot (n-1)^m$. 
We present a construction that produces geodesic braid words of a given length in $B_n$. In particular, we have the following result for all large $n$.
\begin{spacing}{1.5}
\end{spacing}

\noindent {\bf Theorem \ref{GeodesicsGenerator}}. For any $k \in \mathbb{N}$ there exist constants $c>0$ and $N \in \mathbb{N}$ such that
\begin{align*}
\gamma_n(m) \geq c\cdot m^k \cdot (n-1)^m
\end{align*}
for all $m \geq N$.
\begin{spacing}{1.5}
\end{spacing}

For instance, this shows that $h_n(m)/\gamma_n(m)$ converges to zero as $m$ tends to infinity. It is unknown whether $\sum_{m=0}^\infty \gamma_n(m)z^m$ is rational for some $n \geq 4$. In \cite{Sabalka} it is shown that $\sum_{m=0}^\infty \gamma_3(m)z^m$ is rational. The results of computations show that $\log(\gamma_3(m))/m$ converges to $\log (1+\sqrt{2})$ as $m$ tends to infinity. It would be interesting to find more accurate lower bound of $\gamma_n(m)/(n-1)^m$. 

\subsection*{Acknowledgment}
The authors are grateful to A. V. Malyutin for useful discussions.

\section{Preliminaries}

The basic notions of braid theory and knot theory (such as knots, links, prime link diagrams etc.) can be found in \cite{Adams,MurasugiB}. We use the language of geometric group theory that is described in \cite{OH}. Given a set of symbols $S$, we introduce the following notation: $S^{-1} = \{s^{-1} \mid s \in S\}$. Denote by $F(S)$ the set of all words in the alphabet $S\cup S^{-1}$. A word $w \in F(S)$ is called freely reduced if for all $s \in S$ the word $w$ does not contain both $s$ and $s^{-1}$. The letters $s \in S$ are called positive while the letters $s^{-1} \in S^{-1}$ are called negative. Given a group $G = \langle S \rangle$ with a generating set $S$, for $w \in F(S)$ we denote by $[w] \in G$ the corresponding element of the group. For a word $w \in F(S)$, we denote by $|w|$ the length of~$w$. A word $w \in F(S)$ is called non-degenerate if for all letters $s \in S$ the word $w$ contains either $s$ or $s^{-1}$, else $w$ is called degenerate. An element $g \in G$ is called degenerate if it has a degenerate word representative, else $g$ is called non-degenerate.

Let $S_n$ be the symmetric group of degree $n$. Consider the homomorphism $B_n \longrightarrow S_n$ given by $\sigma_i \mapsto (i,i+1)$. The image of a braid under this homomorphism is called the permutation corresponding to the braid. A braid is called pure if the corresponding permutation is the identity permutation. 

Given a braid word $v$, denote by $p(v)$ the number of positive letters in $v$ and by $n(v)$ the number of negative ones. Geometrically, $p(v)$ is a number of positive crossings on the braid diagram corresponding to $v$, and $n(v)$ is the number of negative ones. We introduce the notion of a shadow crossing, see Figure \ref{E2}. By a braid shadow, we mean a picture including shadow crossings only, see Figure \ref{W}. For any braid diagram there is the corresponding braid shadow that is defined in the natural way. A braid diagram is called non-degenerate if its shadow is connected. Put ${\rm exp}(v):= p(v) - n(v)$. The latter is an analogue of the writhe in knot theory. It is easy to check from the presentation of $B_n$ that ${\rm exp}(v)$ does not depend on a word $v$ representing a given braid. Note that $|v| = p(v) + n(v).$

\begin{Lemma}\label{Obviouslemma}
Let $w$ be a geodesic braid word and let $u$ be another braid word representing the same braid. One has $|w| \leq |u|$, $p(w) \leq p(u)$, and $n(w) \leq n(w)$. Moreover, in each inequality, equality holds true if and only if $u$ is geodesic.
\end{Lemma}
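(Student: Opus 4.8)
The plan is to reduce all three inequalities to the single statement $|w| \le |u|$, which is immediate from the definition of a geodesic word, and then to run the same reduction in reverse for the equality clauses. The one ingredient that does all the work is the exponent sum ${\rm exp}(v) = p(v) - n(v)$, which, as noted just above the lemma, depends only on the braid $[v]$ and not on the chosen representative word $v$. Since $w$ and $u$ represent the same braid, I would begin by recording $p(w) - n(w) = p(u) - n(u)$, and denote this common value by $E$.

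Next I would combine this invariance with the identity $|v| = p(v) + n(v)$. Solving the resulting linear system gives, for \emph{any} word $v$ representing this fixed braid, the expressions $p(v) = (|v| + E)/2$ and $n(v) = (|v| - E)/2$. The key observation is that, once the braid (and hence $E$) is fixed, both $p(v)$ and $n(v)$ are strictly increasing affine functions of the single quantity $|v|$. It follows formally that the three comparisons $|w| \le |u|$, $p(w) \le p(u)$, and $n(w) \le n(u)$ are mutually equivalent, and that equality in any one of them forces equality in all three (each reduces to $|w| = |u|$ via the affine relations above).

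It then remains only to establish $|w| \le |u|$ together with its equality criterion, and here I would appeal directly to the definitions. A geodesic word is one whose braid diagram is minimal, i.e. has the fewest crossings among all diagrams of the braid, and $|v|$ is exactly the number of crossings of the diagram of $v$; hence $w$ has minimal length among all representatives, so $|w| \le |u|$. For the equality clause, $|w| = |u|$ says precisely that $u$ also attains this minimal length, which is the definition of $u$ being geodesic, while conversely a geodesic $u$ satisfies $|u| = |w|$ by minimality of both. Transporting this equivalence through the affine relations of the previous paragraph yields the claimed ``if and only if'' for each of the three inequalities. I do not expect any genuine obstacle: the entire content is the invariance of ${\rm exp}$, which couples $p$, $n$, and $|\cdot|$ linearly once the braid is fixed, and this is presumably why the authors label it the ``obvious'' lemma. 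The only point requiring care is the typographical one — the third inequality should of course read $n(w) \le n(u)$ — after which the argument is purely formal.
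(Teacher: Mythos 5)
Your proposal is correct and is essentially the paper's own argument: the authors likewise derive everything from the identity $|v| = 2p(v) - {\rm exp}(v) = 2n(v) + {\rm exp}(v)$ together with the invariance ${\rm exp}(w) = {\rm exp}(u)$, which is exactly your affine coupling of $p$, $n$, and $|\cdot|$. Your note that the statement's third inequality should read $n(w) \leq n(u)$ is also right.
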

\begin{proof}
This follows from $|v| = 2p(v) - {\rm exp}(v) = 2n(v) + {\rm exp}(v)$ and the fact that ${\rm exp}(w) = {\rm exp}(u).$
\end{proof}
This result implies that a braid word representing a positive braid is geodesic if and only if it is positive.

\begin{Remark}\label{B3}
We need an explicit description of geodesic words for $B_3$ given in \cite{Sabalka}. Actually, there is a misprint in Theorem 1.1. For instance, the braid word $w := aBAB$ satisfies the conditions of the theorem, but it is not geodesic: $[aBAB] = [aABA] = [BA].$ Here we write $a := \sigma_1, b:=\sigma_2, A:=\sigma_1^{-1}, B:=\sigma_2^{-1}$. The right statement is as follows. A freely reduced word $w$ in the alphabet $\{a,b,A,B\}$ is a geodesic in $B_3$ if and only if $w$ contains as subwords none of the following: elements of both $\{ab, ba\}$ and $\{AB,BA\}$; both $aba$ and $A$; both $aba$ and $B$; both $ABA$ and $a$; both $ABA$ and $b$; both $bab$ and $A$; both $bab$ and $B$; both $BAB$ and $a$; both $BAB$ and $b$. 
\end{Remark}

\begin{figure}[H]
\centering
\begin{tikzpicture} 
\braid[line width=1.5pt] s_1^{-1};
\end{tikzpicture} \hspace{1cm}
\begin{tikzpicture} 
\braid[line width=1.5pt] s_1;
\end{tikzpicture} \hspace{1cm}
\begin{tikzpicture} 
\braid[line width=1.5pt] s_1;
\braid[line width=1.5pt] s_1^{-1};
\end{tikzpicture}
\caption{Positive crossing (on the left), negative crossing (in the middle), and shadow crossing (on the right).}\label{E2}
\end{figure}
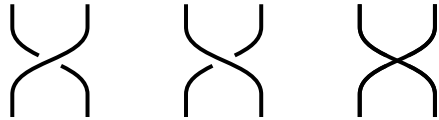

Let $B^3$ be the unit 3-dimensional ball in $\mathbb{R}^3$. An $n$-string tangle in $B^3$ is a collection $S$ of $n$ disjoint oriented intervals and some number of circles, properly embedded in $B^3$ in such a way, that the endpoints of each interval belong to the set $X=\{x_k\}_{k=1}^{2n}$, where $x_k$ are points on the great circle $z=0$ given by $x_k=(\exp(k\pi i/2n),0)$ for odd $k$ and $x_k=(\exp(-(k-1)\pi i/2n),0)$ for even $k$. 

Let $T$ be a tangle with $n$ oriented strings. We can distinguish two endpoints of a string: its input, and its output. Denote by $\partial^- T$ and $\partial^+ T$ sets of inputs and outputs of all strings of $T$, respectively. A tangle $T$ is ordered, if the set $X=\cup_{i=1}^{2n}x_i=\partial^- T\cup \partial^+ T$ is numbered by a collection $\{j_1<j_2<\dots<j_{2n}\}$ of integers, so that each input is numbered by $j_{2k-1}$, and each output is numbered by $j_{2k}$ for some $k=1,2\dots,n$. An ordering of a tangle $T$ is called coherent, if for every $k=1,2,\dots,n$ a string with the input numbered by $j_{2k-1}$ has its output ordered by $j_{2k}$.

By $D$ we denote a braid diagram and by $|D|$ we denote the number of crossings on $D$. By $|b|$ we denote the number of crossings on the minimal diagram of a braid $b \in B_n$. Given a braid diagram such as in Figure \ref{HomBraidExample}, we introduce the natural downward orientation on stings. Now we treat braid diagrams as tangles. 

Consider a crossing on a tangle $T$. We write $T = T_+$ in case the crossing is positive, and $T = T_-$ otherwise. Let $T_+$, $T_-$ and $T_0$ be tangles, which look as shown in Figure \ref{Skein} inside a disk and coincide outside this disk. The tangle $T_0$ is called the result of smoothing of $T_+$, $T_-$.

Following \cite{Polyak}, we recall the definitions of Alexander--Conway polynomial for ordered tangles. Let $D$ be an ordered tangle diagram. An $n$-state of $D$ is a collection of $n$ crossings of $D$. A state $S$ of $D$ defines a new tangle diagram $D(S)$, obtained from $D$ by smoothing all crossings of $S$. The smoothed diagram inherits an ordering from $D$. The state $S$ is called coherent, if $D(S)$ contains no closed components and the ordering of $D(S)$ is coherent, i.e. if both ends of each string numbered by $j_{2i-1}$ and $j_{2i}$ for some $i$. Suppose that $S$ is coherent. As one follows $D(S)$ along the first string of $D(S)$ (starting from its input and ending in its output), then continue to the second string of $D(S)$ in the same fashion, etc., one passes a neighborhood of each smoothed crossing $s\in S$ twice. A (coherent) state $S$ is called descending if one enters this neighborhood first time on the (former) overpass of $D$, and the second -- on the underpass. The sign of a positive crossing is defined to be $+1$ and the sign of a negative crossing is defined to be $-1$. The sign $\operatorname{sign}(S)$ of $S$ is defined as the product of signs of all crossings in $S$.

\begin{figure}[H]
\centering
\includegraphics[width = 8cm]{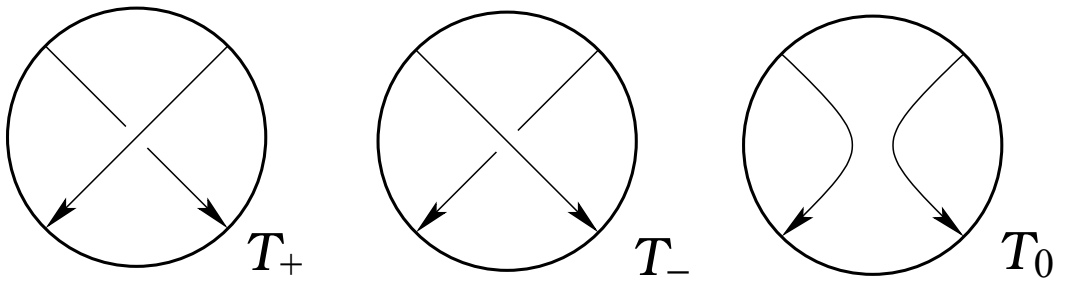}
\caption{Smoothing of a crossing on an oriented diagram. The picture was taken from \cite{Polyak}. }\label{Skein}
\end{figure}

\begin{Remark}\label{HomogeneousOrdering}
Let $D$ be a braid diagram with the standard ordering given by the natural numbering $x_k \mapsto k$. Consider a unique $|D|$--state $S$ of $D$. Note that $D(S)$ is the trivial diagram, so $S$ is coherent. Also, $S$ is descending if and only if $D$ is positive. Now let $D$ be a braid diagram on $n$ strings with an ordering by numbers $1,2, \ldots, 2n$. A unique $|D|$--state $S$ is coherent if and only if for all odd $k$ the output $x_{k+1}$ is numbered by $i+1$ whenever the input $x_k$ is numbered by $i$. If $S$ is descending, then $D$ is homogeneous.
\end{Remark}

An ordering of a braid diagram $D$ is said to be homogeneous if a unique $|D|$--state is descending. If the ordering is homogeneous and $D$ is non-degenerate, then there exist unique $e_1, e_2, \ldots, e_{n-1} \in \{1,-1\}$ such that $D$ is $(e_1,e_2,\ldots,e_{n-1})$--homogeneous. It is easy to check that for any homogeneous braid diagram $D$ there exists at least one homogeneous ordering.

For example, consider a homogeneous braid in Figure \ref{HomBraidExample}. There are more than one possible homogeneous orderings: one is presented in the figure, and another one may be obtained from that by changing numbers from $(19,17,20,18)$ to $(17,19,18,20)$.

Let $D$ be an ordered tangle diagram. Denote by $\mathcal{S}_m(D)$ the set of all descending $n$-states of $D$. Define $c_m(D)\in \mathbb{Z}$ and the Alexander--Conway polynomial $\nabla (D)\in\mathbb{Z}[z]$ of $D$ by 
\begin{align*}
a_m(D)=\sum_{S\in \mathcal{S}_m(D)}\operatorname{sign}(S)\,,\quad \nabla(D)=\sum_{m=0}^\infty a_m(D) z^m
\end{align*}
We emphasize that $\nabla(D)$ depends on an ordering. In fact, the coefficients $c_m(D)$ and the polynomial $\nabla(D)$ are the isotopy invariants, see Theorem 4.6. in \cite{Polyak}. Given a braid $b$ with an ordered diagram $D$, we denote by $\nabla(b)$ the Alexander--Conway polynomial of $D$.

\section{Homogeneous and alternating braids}

We assume that all link diagrams lie on the 2-sphere. Recall that a diagram of a link is called alternating if as we travel through the link diagram by any given orientation, the strings go through the crossings alternately between overpasses and underpasses. A link is called alternating if it has an alternating diagram. Given a link diagram, we consider its shadow. In the shadow, all positive and negative crossings turn out to be shadow crossings, see Figure \ref{E2}. A link diagram is called reduced if its shadow has the following property: no connected component of the shadow complement on the 2-sphere meets a double point from several sides at once. 
\begin{figure}[H]
\centering
\includegraphics[width = 6cm]{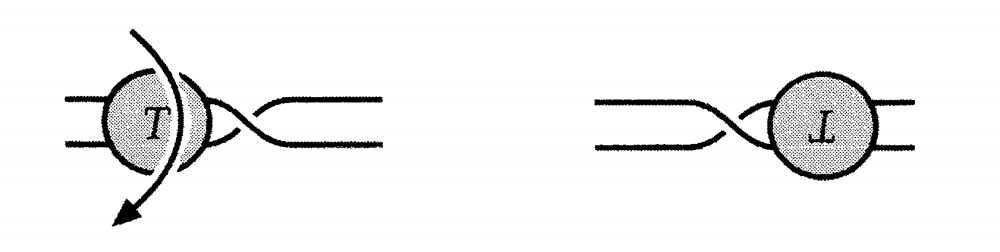}
\caption{Flypes. The picture was taken from \cite{Adams}.}\label{PF}
\end{figure}

We make several historical notes on alternating links. The fact that any two reduced alternating diagrams of the same link have the same number of crossings is one of the number of classical conjectures about alternating links that go back to the original compilations of knot tables by Tait and Little at the end of the 19th century. This result was proved in 1987 by Kauffman, Thistlethwaite, and Murasugi (see \cite{Kauffman, Thistlethwaite, M}) independently by using Jones polynomial for links, which is Laurent polynomial in one variable. It was shown that the span (that is, the difference between the maximal and minimal degrees) of the Jones polynomial $V_L(t)$ of a link $L$ is bounded from above by number of crossings on any diagram of $L$. Moreover, for any reduced alternating link diagram, the equality holds, and for any reduced, prime, non-alternating link diagram, the inequality is strict. The latter implies that any minimal diagram of a prime alternating link is alternating. Also, it is known that an alternating link $L$ is prime if and only if some reduced alternating diagram of $L$ is prime.

It is known that any two reduced alternating diagrams of the same link have the same writhe. Moreover, any two reduced alternating diagrams of the same link are related through a sequence of local moves, which are called flypes (see Figure \ref{PF}). D.W. Sumners has used these results to show that the number of knots grows at least exponentially as a function of minimal crossing number. Conjecture \ref{Wreathlikeconjecture} below, Conjecture \ref{HBGR}, and our lower bound on $v(B_n)$ in the introduction may be treated as braid analogues of these ideas.

\begin{figure}[H]
\centering
\includegraphics[width = 4cm]{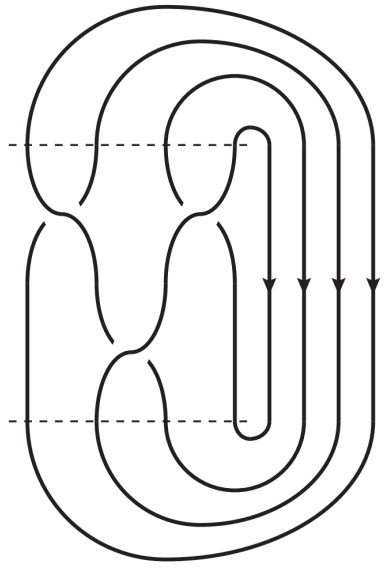} \hspace{2cm}
\includegraphics[width = 4cm]{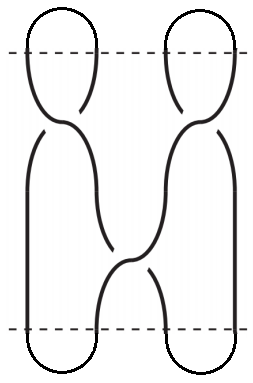}
\caption{The Alexander closure and the plat closure of a braid.}\label{PClosure}
\end{figure}

\subsection{Minimal diagrams of alternating braids}

A braid word $w$ in $B_n$ is called reduced if for any index $1 \leq k \leq n-1$ the total number of letters $\sigma_k,\sigma_k^{-1}$ containing in $w$ is either zero or at least two. Note that a braid diagram is reduced if and only if the corresponding braid word is reduced. Denote by $\overline{w}$ the Alexander closure of the braid diagram corresponding to $w$, see Figure \ref{PClosure}.

\begin{Remark}\label{Reduced}
A braid diagram is reduced if and only if its Alexander closure is reduced.
\end{Remark}

\begin{Remark}\label{AlternatingBraid}
A braid diagram is alternating if and only if its Alexander closure is alternating.
\end{Remark}

\begin{Theorem}\label{ProofOfConjectureForAlternatingBraids}
Suppose $w$ is a braid word representing an alternating braid. Then, $w$ is geodesic if and only if $w$ is alternating. Moreover, the monoids $\mathcal{HBM}(1,-1,\ldots,(-1)^n)$ and $\mathcal{HBM}(-1,1,\ldots,(-1)^{n-1})$ are isomorphic and have the following presentation
$$\langle a_1, a_2, \ldots, a_{n-1} \mid a_ia_j=a_ja_i, \ |i-j|\geq 2\rangle.$$
\end{Theorem}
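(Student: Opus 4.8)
The plan is to prove the two assertions in turn, deducing the monoid presentation from the geodesic criterion together with the rigidity of reduced alternating diagrams. For the geodesic criterion I would work through the Alexander closure and the span of the Jones polynomial, exactly as the classification of minimal alternating \emph{link} diagrams is proved. After reducing to the connected, reduced, prime case (splitting off single-crossing columns, which are automatically alternating and geodesic, and decomposing along the connected-sum/split structure, over which the crossing number is additive), I would argue as follows. If $w$ is alternating and reduced, then by Remark~\ref{Reduced} and Remark~\ref{AlternatingBraid} its Alexander closure $\overline{w}$ is a connected reduced alternating link diagram, so by Kauffman--Murasugi--Thistlethwaite the span of the Jones polynomial equals its crossing number; since closing a braid adds no crossings, $\operatorname{span} V_{\overline{w}} = |\overline{w}| = |w|$. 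For any braid word $u$ with $[u]=[w]$, the diagram $\overline{u}$ represents the same link, so the general span bound gives $\operatorname{span} V \le |\overline{u}| = |u|$, whence $|w| \le |u|$ and $w$ is geodesic. Conversely, if $w$ is geodesic and represents an alternating braid, compare it with an alternating representative $w_0$ (which is geodesic by the previous step, so $|w| = |w_0| = \operatorname{span} V$); were $\overline{w}$ reduced, prime and non-alternating, the strict Kauffman--Murasugi--Thistlethwaite inequality $\operatorname{span} V < |\overline{w}| = |w|$ would contradict $|w| = \operatorname{span} V$, forcing $\overline{w}$, and hence $w$, to be alternating. (For the non-degenerate case this also follows from Theorem~\ref{HomogeneousAreGeodesic}, since a non-degenerate alternating word is one of the two checkerboard-homogeneous types.)

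For the presentation, let $M$ denote the abstract monoid on $a_1,\dots,a_{n-1}$ with only the far-commutativity relations $a_ia_j=a_ja_i$ for $|i-j|\ge 2$, and let $\phi\colon M \to \mathcal{HBM}(e)$ be the surjection $a_i \mapsto \sigma_i^{e_i}$, which is well defined because far commutativity holds in $B_n$. Note that no braid relation can appear: for alternating $e$ one has $e_k \ne e_{k+1}$ for every $k$, so consecutive generators have opposite signs and $\sigma_i^{\pm}\sigma_{i+1}^{\mp}$ admits no braid move — this is precisely the alternating instance of Conjecture~\ref{HBGR}. The isomorphism between $\mathcal{HBM}(1,-1,\dots,(-1)^n)$ and $\mathcal{HBM}(-1,1,\dots,(-1)^{n-1})$ I would obtain from the mirror automorphism $\Phi\colon B_n \to B_n$, $\sigma_i \mapsto \sigma_i^{-1}$: it is a well-defined involutive automorphism (inverting both sides of each defining relation preserves it), and it sends $\sigma_i^{e_i}$ to $\sigma_i^{-e_i}$, hence restricts to a monoid isomorphism $\mathcal{HBM}(e) \xrightarrow{\sim} \mathcal{HBM}(-e)$, interchanging the two alternating sign patterns.

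The heart of the matter is the injectivity of $\phi$. Here I would exploit that $\mathcal{HBM}(e)$ is a submonoid of the \emph{group} $B_n$ and is therefore left- and right-cancellative, and induct on word length. Given $[x]=[y]$ with $x,y$ alternating words over the $a_i$ (equal length by Lemma~\ref{Obviouslemma}), I take a first letter $a_i$ of $x$, so $a_i$ left-divides $[y]=[x]$ in $\mathcal{HBM}(e)$; I then bring $a_i$ to the front of $y$ by far commutations, cancel it, and recurse on the shorter equality. This reduces everything to the following \textbf{left-divisor lemma}: for an alternating geodesic word $y$, the generator $a_i$ left-divides $[y]$ in $\mathcal{HBM}(e)$ \emph{if and only if} the first occurrence of $a_i$ in $y$ is not preceded by any $a_{i-1}$ or $a_{i+1}$ — equivalently, the collection of admissible first letters of $y$ matches that of its image in $M$. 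The ``if'' direction is immediate from far commutativity; the ``only if'' direction, which says that a left divisor visible in $B_n$ is already visible combinatorially (via the correspondence between far-commutativity classes and plane-isotopy classes of braid shadows), is equivalent to the rigidity statement that the alternating diagram of an alternating braid is unique up to plane isotopy.

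That rigidity is the main obstacle. I expect to establish it by passing once more to the Alexander closure: $\overline{x}$ and $\overline{y}$ are reduced alternating diagrams of the same link, hence related by flypes via the Menasco--Thistlethwaite flyping theorem, and I would show that any flype between reduced alternating braid-closure diagrams of this form realizes a far-commutativity move (or a composition of such) on the underlying braid word, so that $x$ and $y$ are far-commutativity equivalent and thus equal in $M$. Equivalently — and this is the formulation I would try to make self-contained — it suffices to verify, for each pair of consecutive columns $i,i+1$, that the projection of the word onto $\{a_i,a_{i+1}\}$ together with the individual column counts is a braid invariant; the projection lemma for trace monoids then forces $x \equiv y$. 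Controlling the flypes (or, alternatively, proving the projection invariance directly from the link invariants) is the delicate point, since a priori a flype can move crossings across the closure arc; ruling out nontrivial such moves for these diagrams is where the argument will require the most care.
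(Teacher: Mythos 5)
Your strategy is essentially the paper's strategy --- Kauffman--Murasugi--Thistlethwaite via the Alexander closure for the geodesic criterion, and the flyping theorem for the presentation --- but at each point where the hypotheses of those theorems must be arranged, you leave a gap that the paper closes with an explicit padding construction. (i) For ``alternating $\Rightarrow$ geodesic'' you need $\overline{w}$ reduced. Your reduction by ``splitting off single-crossing columns'' fails as stated: a column of $w$ containing a single crossing becomes a nugatory crossing of $\overline{w}$, so the link crossing number satisfies $c(\overline{[w]})\le |w|-1$ and the span bound only returns $|u|\ge |w|-s$; nor is there any evident additivity of \emph{braid} crossing number over such a splitting. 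The paper's fix is one line: $ww$ is always reduced and still alternating, so $\overline{ww}$ is minimal, hence $ww$ and therefore $w$ is geodesic. (ii) For the converse you invoke the strict span inequality for a ``reduced, prime, non-alternating'' diagram, but $\overline{w}$ need be neither reduced nor prime, and you give no mechanism to arrange this. The paper appends $u=(\sigma_1\sigma_2^{-1}\cdots\sigma_{n-1}^{(-1)^n})^{100}$: then $\overline{wu}$ is reduced, the underlying link is alternating and prime (both witnessed by $\overline{vu}$ for an alternating representative $v$), and $\overline{wu}$ is minimal, hence alternating, hence so is $w$. (Your parenthetical route through Theorem~\ref{HomogeneousAreGeodesic} and the Remark following it does correctly dispatch the non-degenerate case.)

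The serious gap is in the presentation. Your cancellation induction correctly reduces everything to the rigidity statement that two alternating words representing the same braid are far-commutativity equivalent, and you correctly identify the flyping theorem as the tool; but you explicitly defer the one step that carries all the content, namely showing that every flype between the two closure diagrams is realized by far commutativity (flypes may move crossings across the closure arcs, as you yourself note). That deferred step is precisely what the paper's construction supplies: it pads by $u=(\sigma_2^{-1}\sigma_1\sigma_2^{-1}\sigma_3\cdots\sigma_{n-1}^{(-1)^n}\sigma_{n-2}^{(-1)^{n-1}})^{100}$, takes \emph{plat} closures of $uvu$ and $uwu$ rather than Alexander closures, and chooses $u$ so that no nontrivial flype can be performed at all; the flyping theorem then forces the two diagrams to be plane-isotopic, which is exactly far-commutativity equivalence of the words. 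Without some such device your argument stops one step short of the conclusion. (Your mirror automorphism $\sigma_i\mapsto\sigma_i^{-1}$ for the isomorphism of the two monoids is fine and is presumably what the paper intends.)
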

\begin{proof}
Suppose $b \in B_n$ is an alternating braid on $n$ strings with an alternating braid word representative $w$. We prove that $w$ is geodesic. Note that $ww$ is reduced and alternating. Due to remarks \ref{AlternatingBraid} and \ref{Reduced}, we see that $\overline{ww}$ is a reduced alternating diagram of a link. It follows that $\overline{ww}$ is minimal. Hence $ww$ is minimal, so is $w$.

Now, suppose $b \in B_n$ is an alternating braid with a minimal braid word representative $w$. We prove that $w$ is alternating. Choose an alternating braid word representative $v$ of $b$. Without loss of generality, one can assume that $v$ is $(1,-1, \ldots, (-1)^n)$--homogeneous braid word. Take $u = (\sigma_1\sigma_2^{-1}\sigma_3\ldots \sigma_{n-1}^{(-1)^n})^{100}$. Note that $wu$ is a reduced diagram of a braid $b[u]$ and hence $D_1 = \overline{wu}$ is a reduced diagram of a link $L = \overline{b[u]}$ (see Remark \ref{Reduced}). Since $vu$ is an alternating braid word, $b[u]$ is an alternating braid, hence $D_2 = \overline{vu}$ is an alternating diagram of a link, so $L$ is alternating (see Remark \ref{AlternatingBraid}). It follows that $D_1$ is a minimal diagram. By the construction, $D_2$ is a prime diagram of $L$. It follows that $L$ is prime. Since $D_1$ is a minimal diagram of a prime link $L$, $D_1$ is alternating. Hence $wu$ is alternating braid diagram, so is $w$.

Now, suppose alternating braid words $v,w$ represent the same alternating braid in $B_n$. By adding a trivial string on the right, one can assume that $n$ is even. Without loss of generality, one can assume that $v,w$ are $(1,-1, \ldots, (-1)^n)$--homogeneous braid words. Take $u = (\sigma_2^{-1}\sigma_1\sigma_2^{-1}\sigma_3\ldots \sigma_{n-1}^{(-1)^n} \sigma_{n-2}^{(-1)^{n-1}})^{100}$, see Figure \ref{UsefulBraid}. 

Note that $uvu$, $uwu$ are reduced alternating diagrams of the same braid. Consider link diagrams obtained by plat closure of the braid words $uvu$ and $uwu$ as shown in Figure \ref{PClosure}. As for the Alexander closure, it is easy to check that these link diagrams are alternating. By the construction of $u$, they are reduced. It follows that there exist a sequence of flypes that transform the first diagram to the second. Note that no nontrivial flype may be performed. Hence these two diagrams are plane--isotopic. Hence $uvu$ and $uwu$ are plane--isotopic. This means that one can use the far commutativity and transform $uvu$ to $uwu$. Hence one can use the far commutativity to transform $v$ to $u$.
\end{proof}

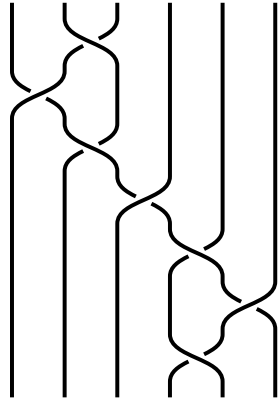
\begin{figure}[H]
\centering
\begin{tikzpicture}
\braid[scale=0.7,line width=1.5pt] s_2s_1^{-1}s_2s_3^{-1}s_4s_5^{-1}s_4;
\end{tikzpicture}
\caption{The braid word $\sigma_2^{-1}\sigma_1\sigma_2^{-1}\sigma_3\ldots \sigma_{n-1}^{(-1)^n} \sigma_{n-2}^{(-1)^{n-1}}$ for $n=6$.}\label{UsefulBraid}
\end{figure}

\subsection{Minimal diagrams of homogeneous braids}

It is well known that for any link $L$ there is a braid whose Alexander closure is $L$. Given a link $L$, by ${\rm Br}(L)$ we denote the least $n \in \mathbb{N}$ such that there exists $b \in B_n$ whose Alexander closure is $L$. The number ${\rm Br}(L)$ is called the braid index of $L$.

We aim to prove the classification theorem for minimal diagrams of homogeneous braids. For this, we use the Alexander-Conway polynomial for ordered tangles. Using the Alexander-Conway polynomial for links, one can prove the following result (see Proposition 7.4. in \cite{MurasugiA}).

\begin{Proposition}\label{MurasugiTheorem}
Let $w$ be a homogeneous braid word on $n$ strings. If ${\rm Br}(\overline{w}) = n$, then $\overline{w}$ is a minimal diagram of a link.
\end{Proposition}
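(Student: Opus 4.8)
The plan is to determine the degree of the Conway polynomial $\nabla$ of the link $L = \overline{w}$ from the diagram $\overline{w}$ itself, and then to show that no diagram of $L$ with fewer crossings can carry a polynomial of that degree. Write $c = |w|$ for the number of crossings of $\overline{w}$. Since $\overline{w}$ is a braid closure on $n$ strings, Seifert's algorithm applied to the downward orientation produces exactly $n$ Seifert circles, so the resulting Seifert surface $F$ has Euler characteristic $\chi(F) = n - c$. I may assume $w$ is non-degenerate; otherwise $\overline{w}$ is split, and since both the crossing number and the braid index add over the disjoint sub-braids, minimality follows by treating each connected piece on the non-degenerate case.

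The first main step will be the lower bound $\deg \nabla_L \ge c - n + 1$, and this is exactly where homogeneity enters. By Stallings' fibration theorem \cite{Stallings}, the Alexander closure of a non-degenerate homogeneous braid is a fibered link whose fiber is precisely the Seifert surface $F$ above. For a fibered link the Alexander polynomial is monic of degree $1 - \chi(\text{fiber})$, so $\nabla_L$ has degree exactly $1 - \chi(F) = c - n + 1$, with leading coefficient a unit. It is here that homogeneity is indispensable: for a general braid word the leading contributions can cancel and the degree drops below $c - n + 1$.

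The second step is the matching upper bound for competing diagrams. Let $D'$ be any diagram of $L$ with $c'$ crossings and $s'$ Seifert circles. Seifert's algorithm on $D'$ yields a surface whose first Betti number, hence the size of an associated Seifert matrix, is $c' - s' + 1$, so $\deg \nabla_L \le c' - s' + 1$. By Yamada's theorem the minimal number of Seifert circles over all diagrams of $L$ equals the braid index, whence $s' \ge {\rm Br}(L) = {\rm Br}(\overline{w}) = n$. Combining the two steps,
\[
c' \;\ge\; \deg \nabla_L + s' - 1 \;\ge\; (c - n + 1) + n - 1 \;=\; c,
\]
so every diagram of $L$ has at least $c$ crossings and $\overline{w}$ is minimal.

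The routine ingredients are the upper bound $\deg \nabla_L \le c' - s' + 1$ (a size count for the Seifert matrix) and Yamada's identification of the minimal Seifert-circle number with the braid index. The crux, and the step I expect to be the main obstacle, is the degree lower bound, i.e. the non-vanishing of the top Conway coefficient of $\overline{w}$: this is precisely what fails for inhomogeneous words and what the fibration theorem supplies in the homogeneous case. An alternative to invoking fiberedness would be to compute this top coefficient directly from the descending-state sum with a homogeneous ordering, where the sign homogeneity of each column prevents cancellation in the leading term; this is nearer to the tangle formalism developed above but provides the same input.
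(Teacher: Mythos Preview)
Your argument is correct and follows essentially the same scheme the paper sketches (citing Murasugi): bound $\deg\nabla_L$ above by $|D'|-s(D')+1$ for any diagram, and show equality on the homogeneous braid closure $\overline{w}$ so that $\deg\nabla_L=c-n+1$. The only substantive difference is how you justify the lower bound: the paper invokes the general fact that equality holds for homogeneous link diagrams, whereas you deduce it from Stallings' fibration theorem and monicity of the Alexander polynomial for fibered links. Both routes are valid and yield the same input; your alternative remark about reading off the top coefficient from a descending-state sum with a homogeneous ordering is in fact closer to the tangle-polynomial computation the paper uses for Theorem~\ref{HomogeneousAreGeodesic}. You also make explicit the use of Yamada's theorem to get $s'\ge n$ from the hypothesis $\mathrm{Br}(\overline{w})=n$, a step the paper's one-line sketch leaves implicit but which is indeed needed to close the inequality.
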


The proof is based on the fact that for any diagram $D$ of a link $L$ the degree of the Alexander--Conway polynomial of $L$ is bounded from above by $|D|-s(D)+1$ (here $s(D)$ is the number of Seifert circles of $D$), and that for homogeneous link diagrams the equality holds.

The result above implies that any homogeneous braid diagram is minimal as follows. Let $w$ be a homogeneous braid word. Since the subgroup of pure braids has finite index in $B_n$, there exists $m \in \mathbb{N}$ such that the Alexander closure of $u := w^m$ is a link with $n$ components. In particular, ${\rm Br}(\overline{u}) = n$. Using Proposition \ref{MurasugiTheorem}, we see that $\overline{u}$ is a minimal link diagram. Hence $u = w^n$ is a geodesic word, so is $w$.

We did not manage to prove that any minimal diagram of a homogeneous braid is homogeneous by using only the Alexander closure technique. Nevertheless, we use the Alexander-Conway polynomial for tangles to prove the classification theorem.

\begin{Lemma}\label{Inequalities}
Let $D$ be an ordered diagram of a braid $b$. One has ${\rm deg} \nabla(b) \leq |b|$. The equality holds if and only if the ordering is homogeneous.
\end{Lemma}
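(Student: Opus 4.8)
The plan is to read off the top-degree coefficient of $\nabla$ from the single ``full'' state that smooths every crossing, and then to replace the resulting diagram-dependent bound by the intrinsic bound $|b|$ using the isotopy invariance of the tangle polynomial.

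First I would record the crude estimate. Any descending $m$-state is a sub-collection of the crossings of $D$, so $m \le |D|$, whence $a_m(D) = 0$ for $m > |D|$ and $\deg \nabla(D) \le |D|$. The coefficient of $z^{|D|}$ is governed by a single state: the only $|D|$-state is the full state $S_0$ smoothing all crossings, and for a braid $S_0$ smooths to the trivial diagram, which has no closed component. Thus $a_{|D|}(D)$ equals $\operatorname{sign}(S_0) = \pm 1$ exactly when $S_0$ is a descending (in particular coherent) state, and equals $0$ otherwise. Since this top coefficient is a single term, no cancellation can occur, and we obtain the clean per-diagram statement: $\deg \nabla(D) = |D|$ if and only if $S_0$ is descending, i.e. if and only if the ordering is homogeneous.

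Next I would upgrade the bound from $|D|$ to $|b|$. By Theorem~4.6 of \cite{Polyak} the polynomial $\nabla$ is an isotopy invariant of the ordered tangle; any two diagrams of $b$ are isotopic rel boundary and inherit the same numbering of the $2n$ endpoints, so $\nabla(b)$ may be computed on a minimal diagram $D_{\min}$. Applying the crude estimate to $D_{\min}$ gives $\deg \nabla(b) = \deg \nabla(D_{\min}) \le |D_{\min}| = |b|$, which is the asserted inequality. For the equality, suppose first that the ordering of $D$ is homogeneous; then by Remark~\ref{HomogeneousOrdering} the diagram $D$ is homogeneous, and homogeneous diagrams are minimal (as deduced above from Proposition~\ref{MurasugiTheorem}), so $|D| = |b|$ and the top-coefficient computation gives $\deg \nabla(b) = |D| = |b|$. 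Conversely, if $\deg \nabla(b) = |b|$, realising this value on a minimal diagram (where $|D_{\min}| = |b|$) forces $a_{|D_{\min}|}(D_{\min}) \neq 0$, so its full state is descending and the ordering is homogeneous.

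I expect the genuine obstacle to be precisely the passage from $\deg \nabla(D) \le |D|$ to $\deg \nabla(b) \le |b|$. Everything about the top coefficient is elementary and local, but it only controls the crossing number of the diagram actually in hand; making the degree reach the \emph{minimal} crossing number requires two global inputs, namely the isotopy invariance of the tangle Alexander--Conway polynomial and the previously established fact that a homogeneous diagram is already minimal. For this reason the equality statement is sharpest, and is applied, when $D$ is itself minimal, so that $|D| = |b|$ and the conditions ``$S_0$ is descending'' and ``$\deg \nabla(b) = |b|$'' become literally equivalent; this is exactly the configuration in which the lemma feeds into the proof that a minimal diagram of a homogeneous braid must be homogeneous.
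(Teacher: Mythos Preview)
Your core argument --- the crude bound $\deg\nabla(D)\le|D|$, the identification of the top coefficient with the single full state $S_0$, and the appeal to isotopy invariance to pass to $\deg\nabla(b)\le|b|$ --- is exactly what the paper does. The paper's proof, however, \emph{stops} at the diagram-level statement: it concludes only that $\deg\nabla(D)=|D|$ if and only if the ordering is homogeneous, and this is all that is used downstream (in Theorem~\ref{HomogeneousAreGeodesic} the chain $|D|=\deg\nabla(D)=\deg\nabla(b)\le|b|\le|D|$ works directly from the $|D|$-version).

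Your second paragraph tries to upgrade the equality clause to the $|b|$-version literally, and this is where you diverge from the paper and run into trouble. For the forward direction you invoke the Murasugi-based argument that homogeneous diagrams are minimal; this is logically available (it appears just before the Lemma), but it is heavier than anything the paper uses here and muddies the structure in which the Lemma is the \emph{input} to that minimality fact. For the backward direction there is a real gap: from $\deg\nabla(b)=|b|$ you deduce that the full state of a \emph{minimal} diagram $D_{\min}$ is descending, i.e.\ that the ordering is homogeneous \emph{for $D_{\min}$}; but homogeneity of an ordering is a property of the pair (ordering, diagram), and you have not shown it for the given $D$. Indeed if $D$ is non-minimal then $\deg\nabla(D)=|b|<|D|$ forces $a_{|D|}(D)=0$, so the ordering is \emph{not} homogeneous for $D$ even though $\deg\nabla(b)=|b|$. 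Your own final paragraph essentially concedes this. The fix is simply to state and prove the $|D|$-version of the equality, as the paper's proof in fact does.
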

\begin{proof}
Note that $a_{|D|+m} = 0$ for all $m \in \mathbb{N}$. Hence ${\rm deg} \nabla(D) \leq |D|$ and the inequality follows. From Remark \ref{HomogeneousOrdering} it follows that $a_{|D|} \neq 0$ if and only if the ordering of $D$ is homogeneous. Therefore, ${\rm deg} \nabla(D) = |D|$ if and only if the ordering of $D$ is homogeneous.
\end{proof}

\begin{Theorem}\label{HomogeneousAreGeodesic}
Any homogeneous braid diagram is minimal. Also, any minimal diagram of a homogeneous braid is homogeneous.
\end{Theorem}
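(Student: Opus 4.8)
The plan is to deduce both directions from the single sharp estimate in Lemma~\ref{Inequalities}, using that the Alexander--Conway polynomial $\nabla$ of an ordered braid is an isotopy invariant depending only on the braid together with the numbering of its $2n$ boundary points, and that this numbering is common to all diagrams representing the braid. Thus an ``ordering'' can be transported between any two diagrams of $b$, while the property of being a \emph{homogeneous} ordering is relative to a particular diagram; separating these two roles is what drives the argument.

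For the first assertion I would take a homogeneous diagram $D$ of a braid $b$ and equip it with a homogeneous ordering $\mathcal{O}$, which exists for any homogeneous diagram. The equality case of Lemma~\ref{Inequalities} gives $\deg\nabla(b)=|D|$ (with $\nabla$ computed via $\mathcal{O}$), while the inequality part gives $\deg\nabla(b)\leq|b|$. Since trivially $|b|\leq|D|$, the chain $|D|=\deg\nabla(b)\leq|b|\leq|D|$ forces $|D|=|b|$, so $D$ is minimal. This reproves what Proposition~\ref{MurasugiTheorem} already yields for Part~1, but in a form directly reusable for the converse.

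For the second assertion, let $D$ be a minimal diagram of a homogeneous braid $b$. Since $b$ is homogeneous it has some homogeneous diagram $D'$; fix a homogeneous ordering $\mathcal{O}$ of $D'$. By the first part $D'$ is minimal, so $\deg\nabla(b)=|D'|=|b|$, the polynomial being taken with respect to $\mathcal{O}$. Now the numbering $\mathcal{O}$ lives on the fixed boundary points and hence applies verbatim to $D$, and because $\nabla$ is an isotopy invariant of the ordered braid, the value computed from $D$ with $\mathcal{O}$ coincides with the one computed from $D'$. Therefore $\deg\nabla(b)=|b|=|D|$, and the equality case of Lemma~\ref{Inequalities}, applied now to $D$ with the ordering $\mathcal{O}$, shows that $\mathcal{O}$ is a homogeneous ordering \emph{for} $D$. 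By Remark~\ref{HomogeneousOrdering} the diagram $D$ is then homogeneous.

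The main obstacle, and the reason the tangle invariant rather than the link invariant is needed, is precisely this transport of an ordering between two distinct diagrams of the same braid. One must check carefully that $D$ and $D'$ share the same endpoints so that $\mathcal{O}$ carries over unchanged, and that $\nabla$ depends only on the braid and this boundary data; once that is pinned down, the equality characterization of Lemma~\ref{Inequalities} supplies both implications with no further computation.
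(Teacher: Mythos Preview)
Your proof is correct and follows essentially the same route as the paper. Part~1 is identical, and for Part~2 the paper argues by contradiction (if $D$ were not homogeneous then no ordering of $D$ is homogeneous, so $a_{|D|}=0$ and $\deg\nabla(b)<|D|=|b|$, contradicting Lemma~\ref{Inequalities}), while you give the equivalent direct version by transporting the homogeneous ordering from $D'$ to $D$ and invoking the equality case of the lemma; your explicit emphasis on the ordering living on the boundary points and the isotopy invariance of $\nabla$ makes the implicit step in the paper's terse argument fully transparent.
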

\begin{proof}
Let $D$ be a homogeneous diagram of a braid $b \in B_n$. Take any homogeneous ordering of $D$. It follows from Lemma \ref{Inequalities} that
\begin{align*}
|D| \leq {\rm deg} \nabla(b) \leq |b| \leq |D|.    
\end{align*}
Hence $D$ is minimal.

Now, let $D$ be a minimal diagram of a homogeneous braid $b \in B_n$. Assume $D$ is not homogeneous. Then, $D$ has no homogeneous orderings, so $a_{|D|}=0$. The inequality ${\rm deg} \nabla(b)={\rm deg} \nabla(D) < |D| = |b|$ contradicts to Lemma \ref{Inequalities}.
\end{proof}

\begin{Remark}
Let $D$ be a $(e_1,e_2, \ldots, e_{n-1})$--homogeneous diagram of a braid $b$. The same arguments show that any minimal diagram of $b$ is $(e_1,e_2, \ldots, e_{n-1})$--homogeneous.
\end{Remark}

The remark above hints the following weak form of Conjecture \ref{HBGR}. Consider a $(e_1, e_2, \ldots, e_{n-1})$--homogeneous diagram $D$ of a braid on $n$ strings. Call two indexes $i,j \in \{1,2,\ldots,n-1\}$ equivalent, if for all $\min(i,j) \leq k,m \leq \max(i,j)$ one has $e_k = e_m$. These equivalence classes are said to be homogeneous blocks. For example, if $n=7$ and $(e_1, e_2, \ldots, e_{n-1}) = (1,1,-1,-1,1,1)$, then there are 3 equivalence classes: $\{1,2\}$, $\{3,4\}$, and $\{5,6\}$. In general, the equivalence classes are numbered in the natural way.

\begin{conjecture}\label{Wreathlikeconjecture}
Suppose $w_1,w_2$ are two homogeneous braid words representing the same braid. For $1 \leq k \leq n-1$, let $c_k$ (resp. $c_k^\prime$) be the number of letters $\sigma_i, \sigma_i^{-1}$ such that $i$ lies in the $k$-th homogeneous blocks of $w_1$ (resp. $w_2$). Then, $c_k = c_k^\prime$ for all $1 \leq k \leq n-1$.
\end{conjecture}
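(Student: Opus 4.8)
The plan is to reduce the statement to the invariance of a signed, block-graded exponent, and then to attack that invariance with the ``linking'' data to which the name of the conjecture alludes.

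\emph{Reduction.} First I would record that, by the Remark following Theorem \ref{HomogeneousAreGeodesic}, both $w_1$ and $w_2$ are $(e_1,\dots,e_{n-1})$--homogeneous for the \emph{same} sign vector (after discarding trivial strands we may assume $b$ is non-degenerate), so they determine the same homogeneous blocks $B_1,\dots,B_r$ with common signs $\epsilon_1,\dots,\epsilon_r\in\{1,-1\}$. Writing $\exp_i(w)=\#\sigma_i-\#\sigma_i^{-1}$ and $E_k(w)=\sum_{i\in B_k}\exp_i(w)$, homogeneity forces every letter whose index lies in $B_k$ to carry the sign $\epsilon_k$, whence $c_k=\epsilon_k E_k$. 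Since the $\epsilon_k$ agree for $w_1$ and $w_2$, the conjecture is equivalent to $E_k(w_1)=E_k(w_2)$ for all $k$. Summing over $k$ gives $\sum_k\epsilon_k c_k={\rm exp}(b)$, the abelianisation invariant; this already settles the single-block case (positive or negative braids) and is consistent with Lemma \ref{Obviouslemma}. The content is therefore to split ${\rm exp}(b)$ into its per-block pieces in an invariant way.

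\emph{The tool.} Label the strands of $b$ by their top endpoints and, for $s<t$, let $\ell_{s,t}(b)\in\mathbb Z$ be the signed number of crossings between the strand starting at $s$ and the strand starting at $t$. I would check that $\ell_{s,t}$ is a genuine invariant of $b$: far commutativity moves disjoint strand pairs, a free reduction $\sigma_i\sigma_i^{-1}$ crosses one pair twice with opposite signs, and the braid relation $\sigma_k\sigma_{k+1}\sigma_k=\sigma_{k+1}\sigma_k\sigma_{k+1}$ produces exactly one crossing of each of the three strand pairs on both sides, so all defining moves preserve every $\ell_{s,t}$. Together with the underlying permutation these numbers package into a homomorphism from $B_n$ into a wreath-type group (the source of the conjecture's name), and they refine $\sum_{s<t}\ell_{s,t}(b)={\rm exp}(b)$ into per-strand-pair exponents. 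Whenever the crossings of each block are forced to occur between a determined set of strand pairs --- for instance in the alternating and positive situations already covered by Theorems \ref{ProofOfConjectureForAlternatingBraids} and \ref{HomogeneousAreGeodesic} --- this immediately recovers $E_k$ and hence the result.

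\emph{The obstacle.} The difficulty, and the reason the statement is only conjectural, is that a single strand pair $\{s,t\}$ may cross several times at heights lying in \emph{different} blocks, so the invariant $\ell_{s,t}$ mixes precisely the block contributions that $E_k$ must separate. Equivalently, the block to which an individual crossing belongs is a mid-braid datum, not visibly a topological one, and the strand-ranges $[p_k,q_k+1]$ of adjacent blocks overlap in the shared boundary strand $q_k+1$, so no clean cut isolates a block. Neither passing to a pure power $b^m$ together with strand-deletion homomorphisms, nor the multivariable Alexander polynomial (whose variables track permutation cycles, i.e.\ the components of $\overline{b}$), localises a block, since a single strand or component traverses many blocks. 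I therefore expect the viable route to be to establish enough of the Artin--Tits structure of $\mathcal{HBM}(e_1,\dots,e_{n-1})$ --- that is, Conjecture \ref{HBGR} --- to guarantee that $w_1$ and $w_2$ are joined by a sequence of far commutations and \emph{within-block} braid relations through homogeneous words only; every such move keeps each crossing inside its block and hence preserves $E_k$ exactly. Producing such a homogeneous rewriting system, without first having the full monoid presentation in hand, is the main obstacle.
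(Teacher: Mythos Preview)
The statement you are attempting to prove is listed in the paper as a \emph{conjecture} (Conjecture~\ref{Wreathlikeconjecture}), not a theorem; the paper gives no proof, so there is no argument of the authors' to compare your attempt against. The paper explicitly introduces it as a weak form of the (also open) Conjecture~\ref{HBGR} on the Artin--Tits presentation of $\mathcal{HBM}(e_1,\ldots,e_{n-1})$.

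Your reduction is correct and useful: the Remark following Theorem~\ref{HomogeneousAreGeodesic} does force $w_1$ and $w_2$ to share a common sign vector (up to degeneracy), so the question is exactly whether the block-wise exponent sums $E_k$ are invariants of the braid. Your linking invariant $\ell_{s,t}$ is genuinely a braid invariant, and you have put your finger precisely on why it does not finish the job: a single strand pair can cross at heights lying in several different blocks, so the $\ell_{s,t}$ mix the $E_k$ rather than separate them. Your conclusion --- that the natural route is to establish enough of Conjecture~\ref{HBGR} to get a rewriting through homogeneous words only --- is exactly the relationship the paper itself is hinting at by calling this a ``weak form'' of Conjecture~\ref{HBGR}. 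In short, your write-up is a sound analysis of the difficulty rather than a proof, which is appropriate given that the statement remains open in the paper.
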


\section{Geodesic growth of braid groups with respect to Artin generators}

Consider a braid diagram $D$ on $n$ strings. Given $1 \leq k_1, \ldots, k_s \leq n$, consider a braid diagram on $s$ strings that is obtained from $D$ by deleting all strings except $k_1, k_2, \ldots, k_s$. All the resulting braid diagrams are called subdiagrams of $D$.

\begin{definition}
A braid diagram $D$ on $n$ strings is said to be winding if there exist $m \in \mathbb{N}$ and a multiset of braid subdiagrams $D_1, \ldots, D_s$ of $D$ with the following properties:
\begin{enumerate}
\item each $D_k$ is minimal;
\item for any two strings of $D$, there are exactly $m$ elements of the multiset that contain both of the strings.
\end{enumerate}
A braid word is said to be winding if the corresponding braid diagram is winding.
\end{definition}

\begin{Lemma}\label{WindingWordsAreGeodesic}
Any winding braid diagram is minimal.
\end{Lemma}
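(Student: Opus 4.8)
Let $D$ be a winding diagram of a braid $b \in B_n$, and fix the witnessing data from the definition: an integer $m$ and a multiset of subdiagrams $D_1, \ldots, D_s$, where each $D_k$ is obtained from $D$ by erasing all strings outside some subset $K_k \subseteq \{1,\ldots,n\}$, and each $D_k$ is minimal. The plan is to compare $|D|$ with $|D'|$ for an arbitrary diagram $D'$ of $b$, by summing the crossing numbers of the corresponding subdiagrams and using the minimality of each $D_k$. (If $n \leq 1$ then $D$ has no crossings and is trivially minimal, so assume $n \geq 2$, whence $m \geq 1$.)

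First I would record a double-counting identity. Each crossing of $D$ lies on exactly two strings, and erasing all strings outside $K_k$ retains precisely those crossings both of whose strings belong to $K_k$ (erasure neither creates nor destroys crossings among the retained strands). Hence $|D_k|$ equals the number of crossings of $D$ whose two strings both lie in $K_k$. Summing over $k$ and interchanging the order of summation, each crossing of $D$ is counted once for every subdiagram containing both of its strings, which by property (2) is exactly $m$. Thus $\sum_{k=1}^s |D_k| = m\,|D|$.

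Next I would transport this identity to $D'$. Using the same subsets $K_k$, form the erasure subdiagrams $D'_k$ of $D'$. The identical double count gives $\sum_{k=1}^s |D'_k| = m\,|D'|$, since property (2) concerns only the subsets $K_k$ and the string labels, not the chosen diagram. The key point is that $D_k$ and $D'_k$ represent the same braid $b_k \in B_{|K_k|}$: forgetting a fixed set of strands is an isotopy invariant of braids, because an ambient isotopy carrying the geometric braid of $D$ to that of $D'$ restricts to an isotopy of the retained strands. Since $D_k$ is minimal, $|D_k| = |b_k| \leq |D'_k|$ for every $k$. Summing, $m\,|D| = \sum_k |D_k| \leq \sum_k |D'_k| = m\,|D'|$, and dividing by $m \geq 1$ yields $|D| \leq |D'|$. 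As $D'$ was an arbitrary diagram of $b$, the diagram $D$ is minimal.

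The one genuinely non-formal step, which I expect to be the main obstacle, is the well-definedness and invariance of strand deletion, namely that $D_k$ and $D'_k$ represent the same braid $b_k$. I would make this precise by describing a braid as an isotopy class of disjoint monotone arcs, identifying the deleted arcs by their top endpoints, and checking that restricting an isotopy between $D$ and $D'$ to the retained arcs gives an isotopy of the sub-braids (reindexing top and bottom positions order-preservingly). Everything else — the two counting identities and the inequality $|b_k| \leq |D'_k|$ coming from the definition of crossing number — is routine once this invariance is in place.
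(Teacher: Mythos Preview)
Your proof is correct and follows essentially the same approach as the paper: form the corresponding subdiagrams $D_k'$ of an arbitrary diagram $D'$, use that $D_k$ and $D_k'$ represent the same braid so $|D_k| \le |D_k'|$, and compare the two sums via the double-counting identity $\sum_k |D_k| = m|D|$ (the paper records the right-hand side only as $\sum_k |D_k'| \le m|D'|$, but your equality is correct and the argument is the same). The invariance of strand deletion that you flag is indeed the only substantive point, and the paper treats it as known.
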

\begin{proof}
Let $D$ be a winding braid diagram with a multiset of subdiagrams $D_1, \ldots, D_s$ and constant $m$. Let $D^\prime$ be another diagram of the same braid. Denote by $D_1^\prime, D_2^\prime, \ldots, D_s^\prime$ the corresponding subdiagrams of $D^\prime$ on the same strings. Since $D_1, \ldots, D_s$ are minimal, one has $|D_i| \leq |D_i^\prime|$ for all $1 \leq i \leq s$. By counting the number of crossings on $D$ and $D^\prime$, one has
\begin{align*}
m |D| = \sum_{i = 1}^s |D_i| \leq \sum_{i=1}^s |D_i^\prime| \leq m |D^\prime|.
\end{align*}
Hence $|D| \leq |D^\prime|$, so $D$ is minimal.
\end{proof}
By the definition, any minimal braid diagram is winding with $s=1$, $D_1 = D$ and $m=1$. There are examples of minimal braid diagrams that satisfy winding diagram properties only in this trivial case (for example, minimal diagrams of Brunnian braids). The class of winding braid diagrams contains the following interesting class. A braid diagram on $n$ strings is said to be $k$--regular winding, $2\leq k \leq n$, if after deleting any $n-k$ strings on it the resulting subdiagram on $k$ strings turns to be minimal. For instance, any positive or negative braid diagram is $2$--regular winding, and generally speaking, $2$--regular winding braids are those whose any two strings $"$twist$"$ around each other in a positive or negative direction. That is, for any pair of strings $i<j,$ the crossings between $(i,j)$ are either all positive or all negative. It is easy to check that any $k$--regular winding diagram is also $m$--regular winding for all $2 \leq k \leq m \leq n$. Conjecture \ref{SMBC} holds for the braid groups $B_2, B_3$, hence the classes of $2$--regular and $3$--regular winding braid diagrams are closed under end extension. Therefore, Stallings' minimal braid diagrams conjecture holds for $2$--regular and $3$--regular winding braid diagrams.

\begin{Theorem}\label{GeodesicsGenerator}
For any $k \in \mathbb{N}$ there exist constants $c>0$, $N \in \mathbb{N}$ such that for all $m \geq N$ one has
\begin{align*}
\gamma_n(m) \geq c\cdot m^k \cdot (n-1)^m.
\end{align*}
\end{Theorem}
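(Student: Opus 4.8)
The plan is to exhibit, for each fixed $k$, an explicit family of \emph{winding} braid words of length $m$: by Lemma \ref{WindingWordsAreGeodesic} every winding diagram is minimal, so each such word is automatically geodesic. The reason for routing the argument through winding (rather than homogeneous) diagrams is that the family must consist of genuinely non-homogeneous words. Indeed, for a fixed sign vector there are exactly $(n-1)^m$ homogeneous words and only $2^{n-1}$ sign vectors, so homogeneous words contribute only $\Theta_n((n-1)^m)$ and can never supply the extra factor $m^k$; winding diagrams are precisely the mechanism that certifies minimality of non-homogeneous words, via the covering estimate used in the proof of Lemma \ref{WindingWordsAreGeodesic}.

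Concretely, I would build words of the shape ``large free positive bulk plus a bounded number of relocatable defects.'' At all but $O(k)$ of the $m$ positions I place a freely chosen positive generator from the full alphabet $\{\sigma_1,\dots,\sigma_{n-1}\}$; using the full alphabet here is essential, since for fixed $n$ any restriction of the per-letter alphabet below $n-1$ costs an exponential factor and destroys the base $(n-1)^m$. At the remaining $O(k)$ positions I insert copies of a fixed non-homogeneous \emph{gadget} — a short minimal word using some generator with both signs, e.g.\ a $\sigma_j\sigma_{j+1}\sigma_j^{-1}$-type block — placed so that the resulting diagram is $k'$-regular winding for a suitable $k'$ (after deleting all but $k'$ strings, every subdiagram is one already known to be minimal). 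The positions of the $O(k)$ gadgets range over $\binom{m}{O(k)}$ shapes, and because the gadgets are recognizable as the non-positive part of the word, distinct (shape, bulk) pairs yield distinct words.

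The count then reads $\gamma_n(m) \ge \binom{m-r}{r}\,(n-1)^{m-r}$ with $r=O(k)$. Using $\binom{m-r}{r}\ge (m-2r)^{r}/r!$ and absorbing the constants $r!$ and $(n-1)^{-r}$ (both depending only on $n$ and $k$) into $c$, this is $\ge c\,m^{r}(n-1)^m \ge c\,m^{k}(n-1)^m$ for all $m\ge N$, as required; this matches against the trivial upper bound $\gamma_n(m)\le (2n-2)^m=2^m(n-1)^m$ and isolates the super-polynomial correction to the base $(n-1)$.

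The hard part is the \emph{uniform} minimality certificate. Inserting negative crossings into a free positive word does not in general preserve geodesicity: for instance $\sigma_{j}\sigma_{j+1}\sigma_{j}\,\sigma_{j+1}^{-1}=\sigma_{j+1}\sigma_{j}\sigma_{j+1}\sigma_{j+1}^{-1}=\sigma_{j+1}\sigma_{j}$ collapses by two letters, and, through braid relations, such a collapse can be triggered by bulk letters lying arbitrarily far from the defect. Thus the gadgets and their placements must be engineered so that \emph{every} subdiagram entering the winding certificate is minimal, simultaneously for all free choices of the bulk. Verifying this regular-winding property across the entire family — in particular controlling the interaction of each defect with an arbitrary positive environment so that the relevant $k'$-strand subdiagrams remain minimal — is the crux of the argument, and the step I expect to absorb essentially all of the work.
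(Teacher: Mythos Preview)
Your high-level architecture matches the paper's: manufacture a family of $k'$-regular winding words, invoke Lemma~\ref{WindingWordsAreGeodesic}, and count ``free bulk'' times ``positions of $k$ movable blocks'' to get $c\,m^k(n-1)^m$. The counting paragraph is fine. But the actual construction---the step you correctly flag as the crux---is not supplied, and the design you sketch (a \emph{positive} bulk punctuated by short non-homogeneous gadgets) runs into a structural obstruction, not just missing bookkeeping. With a positive bulk, a gadget letter $\sigma_j^{-1}$ creates a negative crossing between whichever two physical strings happen to occupy columns $j,j{+}1$ at that moment, and that pair varies uncontrollably with the bulk. Look at the $3$-strand subdiagram on that pair together with any third string: the bulk contributes an \emph{arbitrary} positive $B_3$-word, which for a generic choice contains $aba$ or $bab$, and by Remark~\ref{B3} the presence of a single $A$ or $B$ from the gadget then destroys geodesicity. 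So $3$-regular winding cannot hold uniformly over all $(n-1)^m$ bulk choices, while for $k'\ge 4$ there is no usable minimality criterion to appeal to. Restricting the bulk to avoid such patterns on every triple of strands would cut the base below $n-1$ and kill the bound.

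The paper's construction sidesteps this by abandoning positivity of the bulk. The free blocks $v_i$ are arbitrary braid \emph{shadows} (any sequence of generator indices---still $(n-1)^{x_i}$ choices each), but crossing \emph{signs} are forced by the identities of the two strings that meet: all $(1,r)$ crossings positive, all $(2,r)$ crossings negative for $r\ge 3$, all $(1,2)$ and all $(i,j)$ with $i,j\ge 3$ positive. Each $v_iu_i$ is then automatically $2$-regular winding. A short reset word $u_i$ (a simple shadow depending on $v_i$) returns strings $1,2$ to columns $1,2$, after which a single fixed separator $w=\sigma_1\cdots\sigma_{n-1}\sigma_{n-1}\cdots\sigma_2\sigma_1^{-1}\sigma_1^{-1}\sigma_2\cdots\sigma_{n-1}\sigma_{n-1}\cdots\sigma_2\sigma_1$ of length $4n-4$ is inserted. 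In the whole word $W$ the only string pair with mixed signs is $\{1,2\}$, and that only inside the copies of $w$; hence every $3$-strand subdiagram not of the form $\{1,2,r\}$ is $2$-regular winding, and for $\{1,2,r\}$ the verification reduces to checking the $B_3$ forbidden-subword list of Remark~\ref{B3} against $\hat w$, which passes by inspection. The sign-by-string-identity device, together with the reset words $u_i$, is precisely the idea your proposal is missing.
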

\begin{proof}
Given $k \in \mathbb{N}$, choose integers $0 \leq x_1, x_2, \ldots, x_{k+1}$. Consider arbitrary $k+1$ braid shadows that have $x_1, \ldots, x_{k+1}$ crossings respectively. Below we explain how to choose crossing types in that shadows in order to obtain corresponding braid words $v_1, \ldots, v_{k+1}$. 

Consider permutations $\pi_1, \ldots, \pi_{k+1} \in S_n$ defined by the braid diagrams $v_1, \ldots, v_{k+1}$. Given $1 \leq i \leq k+1$, consider a simple braid shadow on which the strings $(\pi_i(1), \pi_i(2))$ go to $(1,2)$, see Figure \ref{W}. By a simple braid shadow, we mean a braid shadow in which any two strings cross at most once. Below we explain how to choose crossing types in that $k+1$ shadows in order to obtain corresponding braid words $u_1, \ldots, u_{k+1}$.

We define a braid word $w = \sigma_1\sigma_2\sigma_3\ldots \sigma_{n-1}\sigma_{n-1}\sigma_{n-2} \ldots \sigma_2\sigma_1^{-1}\sigma_1^{-1}\sigma_2\sigma_3 \ldots \sigma_{n-1}\sigma_{n-1}\sigma_{n-2} \ldots \sigma_2\sigma_1$ of length $4n-4$. We put $W := v_1u_1w v_2u_2w v_3u_3w \ldots v_ku_kw v_{k+1}u_{k+1}.$
The braid word $W$ will turn out to be $3$--regular winding and hence geodesic by Lemma \ref{WindingWordsAreGeodesic}. We put $N := (k+1)(n-1)(n-2)/2 + k(4n-4) + (k+1)$. Let $m \geq N$ be a natural number. We assume that integers $x_1, \ldots, x_{k+1}$ satisfy $d + k(4n-4) + t = m,$ where $d := \sum_{i=1}^{k+1} x_i$ and $t := \sum_{i=1}^{k+1} |u_i|$.

\begin{figure}[H]
\centering
\begin{tikzpicture}
\braid[style strands={1}{red},style strands={2}{blue},scale=0.415,line width=1.5pt] s_1^{-1}s_2^{-1}s_3^{-1}s_4^{-1}s_5^{-1}s_5^{-1}s_4^{-1}s_3^{-1}s_2^{-1}s_1s_1s_2^{-1}s_3^{-1}s_4^{-1}s_5^{-1}s_5^{-1}s_4^{-1}s_3^{-1}s_2^{-1}s_1^{-1};
\braid[style strands={1}{red},style strands={2}{blue},scale=0.415,line width=1.5pt] s_1^{-1}s_2^{-1}s_3^{-1}s_4^{-1}s_5^{-1}s_5^{-1}s_4^{-1}s_3^{-1}s_2^{-1}s_1s_1s_2^{-1}s_3^{-1}s_4^{-1}s_5^{-1}s_5^{-1}s_4^{-1}s_3^{-1}s_2^{-1}s_1^{-1};
\end{tikzpicture} \hspace{2cm}
\begin{tikzpicture}
\braid[style strands={1}{red},style strands={2}{blue},scale=1,line width=1.5pt] s_1^{-1}s_2^{-1}s_2^{-1}s_1s_1s_2^{-1}s_2^{-1}s_1^{-1};
\braid[style strands={1}{red},style strands={2}{blue},scale=1,line width=1.5pt] s_1^{-1}s_2^{-1}s_2^{-1}s_1s_1s_2^{-1}s_2^{-1}s_1^{-1};
\end{tikzpicture} \hspace{2cm}
\begin{tikzpicture}
\braid[number of strands=7,
scale=1.135,line width=1.5pt] s_6s_5s_4-s_6s_3-s_5s_2-s_4s_1-s_3s_2;
\braid[number of strands=7,
scale=1.135,line width=1.5pt] s_6^{-1}s_5^{-1}s_4^{-1}-s_6^{-1}s_3^{-1}-s_5^{-1}s_2^{-1}-s_4^{-1}s_1^{-1}-s_3^{-1}s_2^{-1};
\end{tikzpicture} 
\caption{The braid diagram corresponding to $w$ (on the left), the braid diagram corresponding to $\hat{w} = abbAAbba$ (in the middle) and a simple shadow (on the right).
}\label{W}
\end{figure}
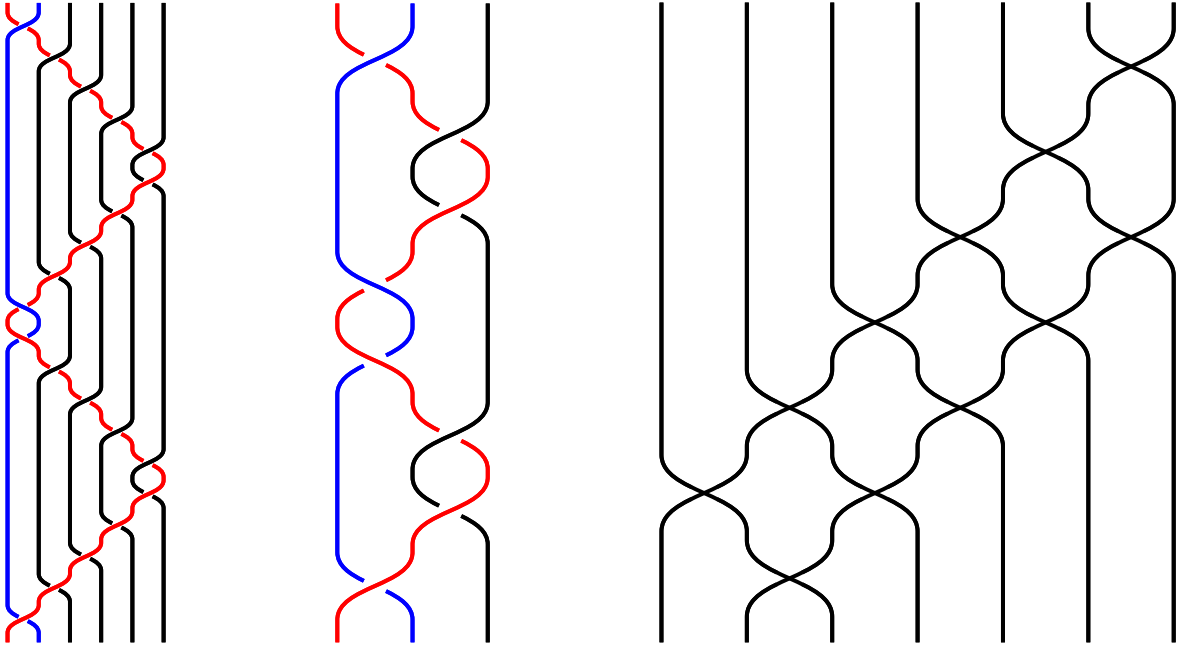

By the construction, there are $(n-1)^{x_i}$ different possibilities of choosing each shadow $v_i$. Hence there are $(n-1)^d$ possibilities of choosing $W$ with given $x_1, \ldots, x_{k+1}$. Note that there are 
$$((m-t)-(4n-4))\cdot ((m-t)-2(4n-4))\cdot  \ldots \cdot ((m-t)-k(4n-4))/k!$$ 
possibilities of choosing $x_1, \ldots, x_{k+1}$, that is, possibilities for positions of $w$'s. This gives
\begin{align*}
\gamma_n(m) &\geq \dfrac{((m-t)-(4n-4))\cdot ((m-t)-2(4n-4))\cdot  \ldots \cdot ((m-t)-k(4n-4))}{k!} \cdot (n-1)^{d} \\
& \geq \dfrac{((m-t)-(4n-4))^k}{k!} \cdot (n-1)^{d} \geq c^\prime \cdot \dfrac{m^k}{k!} \cdot (n-1)^{d} = c \cdot m^k \cdot (n-1)^m,
\end{align*}
where the constant $c>0$ depends only on $n,k$. 

It remains to define the crossings in $v_i, u_i$ and prove that $W$ is $3$--regular winding. Each $v_iu_i$ is going to be $2$--regular winding. More precisely, we put all crossings in $v_iu_i$ between the strings $1,2$ to be positive. For each string $r \neq 1,2$ we put all crossings in $W$ between $1,r$ to be positive and all crossings between $2,r$ to be negative. Finally, for each $\{i,j\} \cap \{1,2\} = \varnothing$ we put the crossings between strings $i,j$ in $W$ to be all positive (actually, it does not matter). 

Now, we prove that $W$ is $3$--regular winding. Consider a subdiagram of the braid diagram of $W$ on three strings $i,j,r$. If $\{i,j,r\} \neq \{1,2,r^\prime\}$, then the subdiagram is $2$--regular winding and hence minimal. It remains to prove that the subdiagrams on $1,2,r$ are minimal for all $3 \leq r \leq n$. 

Let us check the minimality conditions from Remark \ref{B3} for each of these subdiagrams. Denote by $\hat{W}$ the braid word that corresponds to the subdiagram of $W$ on three strings $1,2,r$. The braid word $\hat{w}$ is defined in a similar way. Since the crossing type in $W$ between $1,k$ is positive, whereas the crossing type in $W$ between $2,k$ is negative, we see that $\hat{W}$ contains neither $aba$, $bab$, $ABA$, nor $BAB$. Assume $\hat{W}$ contains elements of both $\{ab,ba\}$ and $\{AB,BA\}$. Note that $\hat{w}$ contains neither $AB$ nor $BA$, but $\hat{w}$ contains $ab$ and $ba$. It remains to show that $\hat{W}$ does not contain neither $AB$ nor $BA$. Note that in each of the diagrams corresponding to $AB$, $BA$ there is a string that crosses two other strings negatively.
By the construction, there is no such string in $\hat{W}$. This shows that $\hat{W}$ is geodesic. Hence $W$ is $3$--regular winding. This completes the proof.
\end{proof}

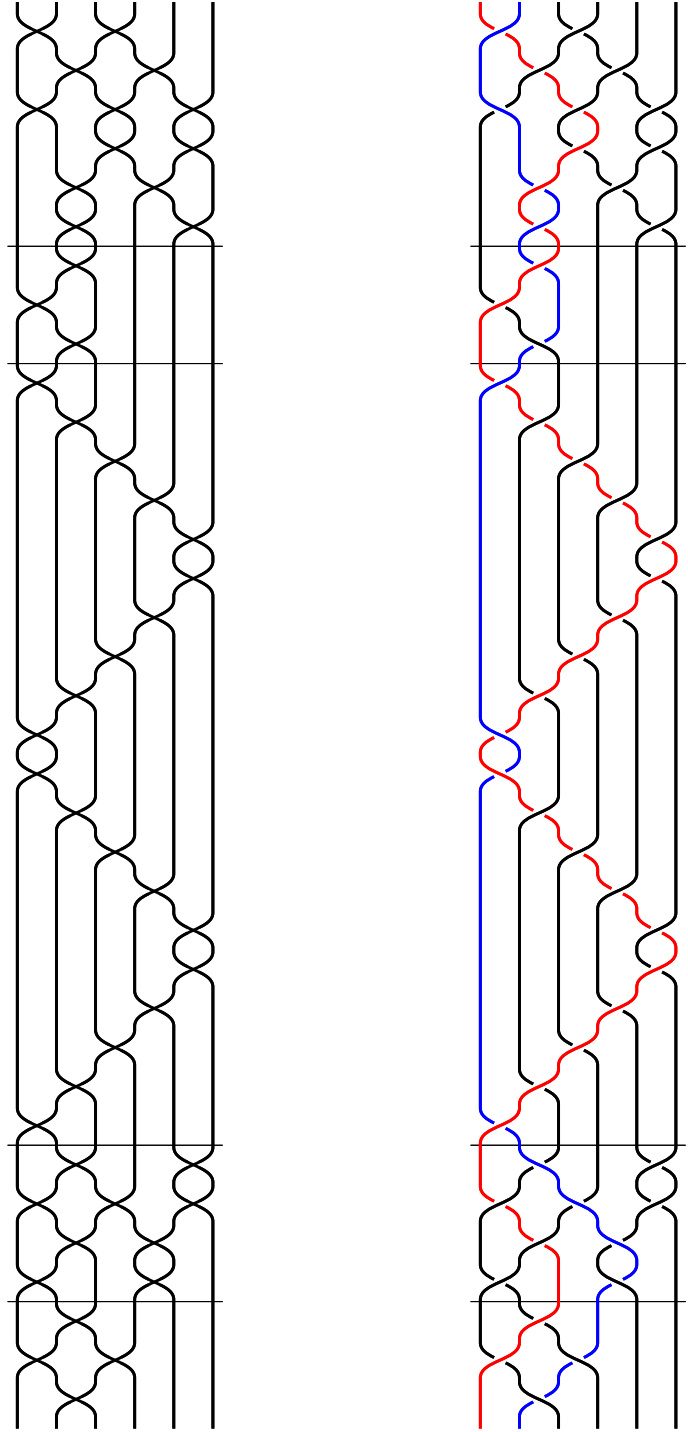
\begin{figure}[H]
\centering
\begin{tikzpicture}
\braid[scale=0.52, line width=1.1pt,floor command={
 \draw (\floorsx,\floorsy) -- (\floorex,\floorsy);
 },] 
s_1^{-1}-s_3^{-1}s_2^{-1}-s_4^{-1}s_1-s_3^{-1}-s_5^{-1}s_3^{-1}-s_5^{-1}s_2^{-1}-s_4^{-1}s_2^{-1}-s_5^{-1}|s_2^{-1}s_1^{-1}s_2|s_1^{-1}s_2^{-1}s_3^{-1}s_4^{-1}s_5^{-1}s_5^{-1}s_4^{-1}s_3^{-1}s_2^{-1}s_1s_1s_2^{-1}s_3^{-1}s_4^{-1}s_5^{-1}s_5^{-1}s_4^{-1}s_3^{-1}s_2^{-1}s_1^{-1}|s_2-s_5^{-1}s_1^{-1}-s_3-s_5^{-1}s_2^{-1}-s_4s_1^{-1}-s_4|s_2^{-1}s_1^{-1}-s_3s_2;
\braid[scale=0.52, line width=1.1pt,floor command={
 \draw (\floorsx,\floorsy) -- (\floorex,\floorsy);
 },] 
s_1-s_3s_2-s_4s_1^{-1}-s_3-s_5s_3-s_5^{1}s_2^{1}-s_4^{1}s_2^{1}-s_5^{1}|s_2^{1}s_1^{1}s_2^{-1}|s_1^{1}s_2^{1}s_3^{1}s_4^{1}s_5^{1}s_5^{1}s_4^{1}s_3^{1}s_2^{1}s_1^{-1}s_1^{-1}s_2^{1}s_3^{1}s_4^{1}s_5^{1}s_5^{1}s_4^{1}s_3^{1}s_2^{1}s_1^{1}|s_2^{-1}-s_5^{1}s_1^{1}-s_3^{-1}-s_5^{1}s_2^{1}-s_4^{-1}s_1^{1}-s_4^{-1}|s_2^{1}s_1^{1}-s_3^{-1}s_2^{-1};
\end{tikzpicture} \hspace{3cm}
\begin{tikzpicture}
\braid[scale=0.52, style strands={1}{red},style strands={2}{blue}, style strands={3}{black},style strands={4}{black}, line width=1.1pt,floor command={
 \draw (\floorsx,\floorsy) -- (\floorex,\floorsy);
 },] 
s_1^{-1}-s_3^{-1}s_2^{-1}-s_4^{-1}s_1-s_3^{-1}-s_5^{-1}s_3^{-1}-s_5^{-1}s_2^{-1}-s_4^{-1}s_2^{-1}-s_5^{-1}|s_2^{-1}s_1^{-1}s_2|s_1^{-1}s_2^{-1}s_3^{-1}s_4^{-1}s_5^{-1}s_5^{-1}s_4^{-1}s_3^{-1}s_2^{-1}s_1s_1s_2^{-1}s_3^{-1}s_4^{-1}s_5^{-1}s_5^{-1}s_4^{-1}s_3^{-1}s_2^{-1}s_1^{-1}|s_2-s_5^{-1}s_1^{-1}-s_3-s_5^{-1}s_2^{-1}-s_4s_1^{-1}-s_4|s_2^{-1}s_1^{-1}-s_3s_2;
\braid[scale=0.52, style strands={1}{red},style strands={2}{blue}, style strands={3}{black},style strands={4}{black}, line width=1.1pt,floor command={
 \draw (\floorsx,\floorsy) -- (\floorex,\floorsy);
 },] 
s_1^{-1}-s_3^{-1}s_2^{-1}-s_4^{-1}s_1-s_3^{-1}-s_5^{-1}s_3^{-1}-s_5^{-1}s_2^{-1}-s_4^{-1}s_2^{-1}-s_5^{-1}|s_2^{-1}s_1^{-1}s_2|s_1^{-1}s_2^{-1}s_3^{-1}s_4^{-1}s_5^{-1}s_5^{-1}s_4^{-1}s_3^{-1}s_2^{-1}s_1s_1s_2^{-1}s_3^{-1}s_4^{-1}s_5^{-1}s_5^{-1}s_4^{-1}s_3^{-1}s_2^{-1}s_1^{-1}|s_2-s_5^{-1}s_1^{-1}-s_3-s_5^{-1}s_2^{-1}-s_4s_1^{-1}-s_4|s_2^{-1}s_1^{-1}-s_3s_2;
\end{tikzpicture}
\caption{A braid word produced by the construction: $k=1$, $x_1 = 13,$ and $x_2=9$.}\label{Example}
\end{figure}

\section{Some remarks on Stallings' minimal braid diagrams conjecture}

Let $G = \langle S \rangle$ be a group with a generating set $S$. Given $g \in G,$ denote by $R(g)$ the set of letters $s \in S\cup S^{-1}$ such that there exists a geodesic word $ws \in F(S)$ with the property $[ws] = b$. By $l_{G,S}(g)$ we denote the word--metric distance between $g \in G$ and $1 \in G$. For example, the length of a braid in the braid group (with the standard Artin generators) is the crossing number of a braid.

Assume that a presentation $G = \langle S \mid R\rangle$ has relations of even length, that is, for all $r \in R$ the length $|r|$ is even. It is easy to see that $l_{G,S}(g[s]) = l_{G,S}(g) \pm 1$ for all $s \in S\cup S^{-1}$. In particular, given a geodesic word $w \in F(S)$ and a letter $s \in S\cup S^{-1}$, $ws$ is geodesic if and only if $s^{-1} \notin R([w])$ (see Lemma 3.1. in \cite{IsobelWebster}). It follows that $R(g) = S \cup S^{-1}$ if and only if $g$ is a dead end element (see \cite{OH}), that is, $l_{G,S}(gs) \leq l_{G,S}(g)$ for all $s \in S \cup S^{-1}$. It is easy to check that the following statement holds.

\begin{Lemma}\label{eq}
Assume a presentation $G = \langle S \mid R \rangle$ has relations of even length. The following conditions are equivalent:
\begin{enumerate}
\item For any geodesic word of the form $ws$ ($s \in S \cup S^{-1}$) the word $wss$ is geodesic;
\item For all $g \in G$, the following holds true: if $s \in R(g)$ then $s^{-1} \notin R(g)$;
\item Let $w$ be a geodesic word. For all $s \in S\cup S^{-1}$ there exists $e \in \{1,-1\}$ such that $ws^e$ is geodesic.
\end{enumerate}
\end{Lemma}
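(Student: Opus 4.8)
The plan is to push everything through the single characterization recalled just above the lemma: for a geodesic word $w$ and a letter $s \in S \cup S^{-1}$, the word $ws$ is geodesic if and only if $s^{-1} \notin R([w])$. (This is where evenness of the relations is used, via $l_{G,S}(g[s]) = l_{G,S}(g) \pm 1$, so that ``geodesic'' and ``strictly length-decreasing'' are interchangeable.) I would first record the reformulation that $s \in R(g)$ precisely when right multiplication by $s^{-1}$ strictly shortens $g$, i.e. $l_{G,S}(g[s^{-1}]) = l_{G,S}(g) - 1$. With this in hand, each of (1), (2), (3) becomes a verbatim restatement of the single condition $(\ast)$: \emph{for every $g \in G$ and every $s \in S \cup S^{-1}$, the set $R(g)$ does not contain both $s$ and $s^{-1}$}. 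So the whole argument is a matter of translating conditions, and I would organize it as $(1)\Leftrightarrow(\ast)$ and $(3)\Leftrightarrow(\ast)$, with (2) being literally $(\ast)$.

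For $(1)\Leftrightarrow(\ast)$: given a geodesic word $ws$, set $g := [ws]$. Applying the characterization with $ws$ in place of $w$ gives that $wss$ is geodesic if and only if $s^{-1} \notin R([ws]) = R(g)$. Moreover $s \in R(g)$ holds automatically, since $ws$ is itself a geodesic word ending in $s$ that represents $g$. Hence (1) says exactly that $s \in R(g)$ forces $s^{-1} \notin R(g)$, as $g$ ranges over all elements admitting a geodesic representative ending in $s$, which is precisely the set of $g$ with $s \in R(g)$. Since the conclusion $s^{-1} \notin R(g)$ depends only on $g$ and $s$, and not on the particular choice of $w$, this is $(\ast)$.

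For $(3)\Leftrightarrow(\ast)$: given a geodesic word $w$, set $g := [w]$. The characterization gives that $ws$ is geodesic iff $s^{-1} \notin R(g)$, and that $ws^{-1}$ is geodesic iff $s \notin R(g)$. Therefore the existence of $e \in \{1,-1\}$ with $ws^e$ geodesic is equivalent to $s \notin R(g)$ or $s^{-1} \notin R(g)$, that is, to $\{s, s^{-1}\} \not\subseteq R(g)$. As $w$ ranges over all geodesic words, $g = [w]$ ranges over all of $G$ (every element has a geodesic representative), so (3) is again $(\ast)$.

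The content here lies entirely in the translation, so the only point requiring care --- and the closest thing to an obstacle --- is matching the quantifiers correctly. In (1) one quantifies over geodesic words $ws$, whereas $(\ast)$ quantifies over group elements $g$ and letters $s$; I would make explicit that each pair $(g,s)$ with $s \in R(g)$ is witnessed by at least one geodesic word of the form $ws$, so that passing between the two formulations loses nothing. Once this is spelled out, together with the fact that length changes by exactly $\pm 1$, the three equivalences follow immediately.
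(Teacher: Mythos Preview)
Your proposal is correct. The paper itself omits the proof entirely (it only says ``It is easy to check that the following statement holds''), and your argument is precisely the natural one suggested by the characterization $ws$ geodesic $\Leftrightarrow s^{-1}\notin R([w])$ recorded just before the lemma; in particular, your handling of the quantifier matching in the $(1)\Leftrightarrow(\ast)$ step via the definition of $R(g)$ is exactly what is needed.
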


Note that the standard presentation for $B_n$ with the Artin generators has relations of even length. For $n\geq 4$, it is unknown whether there are dead end elements in the braid groups $B_n$. If Stallings' minimal braid diagrams conjecture is true, then one has $|R(b)| \leq n-1$ for any $b \in B_n$. Actually, this ineqaulity is sharp, as we explain below. Recall that $B_n^+ = \mathcal{HBM}(1, 1, \ldots, 1)$ is the monoid of positive braids. Following Definition 9.1.12 in \cite{WPG}, we denote by $\succ$ the natural order on $B_n^+$ given by $x \succ y$ whether $zy = x$ for some $z \in B_n^+$. It is known that $(B_n^+, \succ)$ is a lattice, that is, any two elements of $B_n^+$ have a unique supremum. The braid $\Delta = (\sigma_1\sigma_2\ldots \sigma_{n-1})(\sigma_1\sigma_2\ldots \sigma_{n-2}) \ldots (\sigma_1\sigma_2)\sigma_1$ is called the fundamental (Garside) element in $B_n$.
Note that $R(\Delta) = \{\sigma_1, \ldots, \sigma_{n-1}\}$. 
Moreover, a positive braid $b \in B_n^+$ satisfies $|R(b)| = n-1$ if and only if $b \succ \Delta$ (see Theorem 3 in \cite{Garside}). By considering divisors of the fundamental element and by changing their crossing types, one can prove that for any subset $T \subset \{\sigma_1, \ldots, \sigma_{n-1}, \sigma_1^{-1}, \ldots, \sigma_{n-1}^{-1}\}$ without inverse elements there exists $b \in B_n$ such that $R(b) = T$.

\end{document}